\newtheorem{procedure}{Procedure}
\newtheorem{definition}{Definition}
\newtheorem{example}{Example}
\newtheorem{theorem}{Theorem}
\DeclareMathOperator{\vc}{vec}
\DeclareMathOperator{\diag}{diag}
\DeclareMathOperator{\nbd}{nbd}
\title{Construction of cospectral graphs}
\author{Supriyo Dutta \\ Department of Mathematics \\ Indian Institute of Technology Jodhpur\\ Email: \texttt{dosupriyo@gmail.com} \vspace{.25 cm} \\ Bibhas Adhikari \\ Department of Mathematics \\ Indian Institute of Technology Kharagpur\\ Email: \texttt{bibhas@maths.iitkgp.ernet.in}}
\date{}
\begin{document} 
\maketitle

\begin{abstract}
	Construction of non-isomorphic cospectral graphs is a nontrivial problem in spectral graph theory specially for large graphs. In this paper, we establish that graph theoretical partial transpose of a graph is a potential tool to create non-isomorphic cospectral graphs by considering a graph as a clustered graph.
\end{abstract}

	\section{Introduction}

		In this work we propose methods to construct cospectral graphs by utilizing the framework of clustered graphs. A graph $G=(V, E)$ with labellings on vertices is called clustered (also called partitioned) if the vertex set is partitioned as $V=\sqcup_{j=1}^n C_j$ where each $C_j$, along with the edge set induced by the vertices in it, is called a cluster of the graph. Throughout the paper, we denote $C_j$ for both as a set of vertices, and the graph (cluster) induced by the vertices in $C_j$; and the meaning will be clarified from the context. For example, a multi-partite graph can be considered as a clustered graph under specific labellings of vertices.

		The adjacency matrix $\textbf{A}(G)=[a_{ij}]$ associated with a graph $G$ is defined by $a_{ij}=1$ if the vertices $i,j$ are adjacent and $a_{ij}=0$ otherwise. The spectrum of $G$ is the multiset of eigenvalues of $\textbf{A}(G)$. Two isomorphic graphs have the equal spectrum as the corresponding adjacency matrices are permutation similar, but the converse need not be true \cite[Chapter $6$]{cvetkovic1980spectra}. Characterization of graphs that are determined by their spectra is an open problem in algebraic graph theory \cite{dam03which} \cite{OP}. Indeed, in the quest of finding graphs that are determined by their spectra, it is equally important to determine graphs which are not determined by their spectra. Besides, new constructions of cospectral non-isomorphic graphs can have implications on the complexity of the graph isomorphism problem. This calls for developing methods for detection and/or generation of cospectral non-isomorphic graphs. Well-know methods in this directions are the Seidel Switching, Godsil-McKay (GM) switching etc., and many more \cite{dam03which}, \cite{harary1971cospectral}, \cite{seidel1974graphs}, \cite{godsil1982constructing}, \cite{haemers2012seidel}. In this paper we show that partial transpose of a graph developed in \cite{severini2008}, \cite{wu2006conditions} \cite{dutta2016bipartite} can become a handy tool to construct cospectral clustered graphs, in particular large cospectral graphs.

		First we recall the following definition from \cite{dutta2016bipartite}.
		
		\begin{definition}\label{def:1}
			Let $G$ be a clustered graph on $mn$ vertices with clusters $C_i=\{v_{i,1}, v_{i,2}, \dots v_{i,m}\}, i=1,\hdots, n$. Then the \textit{graph theoretical partial transpose} (GTPT) of $G$ is the graph $G^{\tau}$ obtained from $G$ by removing the edges $(v_{i,k}, v_{j,l}),$ for all  $k \neq l, i \neq j$ in $G$ and correspondingly adding the edges $(v_{i,l},v_{j,k}).$ 
		\end{definition}

		Note that, GTPT is defined for clustered graphs when its clusters contain same number of vertices. Also, the number of edges in $G$ and $G^\tau$ are equal. The adjacency matrix of such a clustered graph $G$ (as defined in Definition \ref{def:1}) can be represented by the block matrix
		\begin{equation}\label{bmatrix}
			\textbf{A}(G) = \begin{bmatrix}
			A_{1,1} & A_{1,2} & \dots & A_{1,n} \\
			A_{2,1} & A_{2,2} & \dots & A_{2,n} \\
			\vdots & \vdots & \vdots & \vdots\\
			A_{n,1} & A_{n,2} &\dots &A_{n,n}
			\end{bmatrix}_{mn \times mn},
		\end{equation} 
		where $A_{ii}$ is the adjacency matrix of the cluster $C_i$ and the block matrix $A_{i,j}$ represents the adjacency relations between the vertices of $C_i$ and $C_j, i\neq j; i,j=1,\hdots, n.$ It is easy to ascertain that the adjacency matrix associated with $G^{\tau}$ is given by $\textbf{A}(G^{\tau})=\textbf{A}(G)^{\tau} = [A_{i,j}^t]_{mn \times mn}$, where $^t$ denotes the transpose of a matrix. The matrix $\textbf{A}^{\tau}$ is called the partial transpose of $\textbf{A}.$  The concept of partial transpose has found many applications in quantum information theory, in particular in the detection of quantum entanglement \cite{peres1996}, \cite{horodecki1997}. If $G$ is isomorphic to $G^\tau$ where the identity map acts as the isomorphism, $G$ is called a partially symmetric graph which is shown to be useful in quantum information theory \cite{dutta2016bipartite}. 

		The graphs $G$ and $G^\tau$ are called GTPT equivalent. Two pertinent questions about the GTPT equivalent graphs are as follows. Are the GTPT equivalent graphs isomorphic and/or cospectral? Are the graphs $G^\tau$ and $H^\tau$ isomorphic if $G$ and $H$ are isomorphic?
		
		The following examples depict that GTPT operation can produce isomorphic graphs, non-isomorphic and non-cospectral graphs, and non-isomorphic but cospectral graphs. For instance the graphs $G_1$ and $G_1^\tau$ given by 
		$$G_1 = \xymatrix{\bullet_{1,1} \ar@{-}[d] \ar@{-}[dr] & \bullet_{1,2} \ar@{-}[d] \\ \bullet_{2,1} & \bullet_{2,2}} \hspace{2cm} G_1^\tau = \xymatrix{\bullet_{1,1} \ar@{-}[d] & \bullet_{1,2} \ar@{-}[d] \ar@{-}[dl] \\ \bullet_{2,1} & \bullet_{2,2}}$$ 
		are GTPT equivalent and isomorphic, hence cospectral. The graphs $G_2$ and $G_2^\tau$ given by
		$$G_2 = \xymatrix{\bullet_{1,1} \ar@{-}[r] & \bullet_{1,2} \ar@{-}[d] \ar@{-}[dl] \\ \bullet_{2,1} & \bullet_{2,2}} \hspace{2cm} G_2^\tau = \xymatrix{\bullet_{1,1} \ar@{-}[r] \ar@{-}[dr] & \bullet_{1,2} \ar@{-}[d] \\ \bullet_{2,1} & \bullet_{2,2}}$$ 
		are GTPT equivalent but neither isomorphic nor cospectral. Indeed the GTPT equivalent graphs $G_3$ and $G_3^\tau$ given by 
		$$G_3=\xymatrix{\bullet_{1,1} \ar@{-}[d] \ar@{-}[dr] & \bullet_{1,2} \ar@{-}[d] & \bullet_{1,3} \ar@{-}[d] \ar@{-}[dl] \\ \bullet_{2,1} \ar@{-}[r]& \bullet_{2,2} & \bullet_{2,3} \ar@{-}[l]} \hspace{2cm} G_3^\tau=\xymatrix{\bullet_{1,1} \ar@{-}[d] & \bullet_{1,2} \ar@{-}[d] \ar@{-}[dr] \ar@{-}[dl] & \bullet_{1,3} \ar@{-}[d] \\ \bullet_{2,1} \ar@{-}[r] & \bullet_{2,2} & \bullet_{2,3} \ar@{-}[l]}$$ 
		are non-isomorphic but cospectral. The following example shows that two isomorphic graphs $G$ and $H$ need not imply the isomorphism of $G^\tau$ and $H^\tau.$ 
		\begin{example}\label{noncos}
			Consider the graphs $G$ and $H$ as follows.
			$$G = \xymatrix{\bullet_{1,1} \ar@{-}[rd] & \bullet_{1,2} & \bullet_{1,3} \ar@{-}[ld] \\ \bullet_{2,1} \ar@{-}[r] & \bullet_{2,2} \ar@{-}[r] & \bullet_{2,3}} \hspace{1cm} H = \xymatrix{\bullet_{1,1} \ar@{-}[rd] & \bullet_{1,2} \ar@{-}[d] & \bullet_{1,3} \\ \bullet_{2,1} \ar@{-}[r] & \bullet_{2,2} \ar@{-}[r] & \bullet_{2,3}}$$
			Then 
			$$G^\tau = \xymatrix{\bullet_{1,1} & \bullet_{1,2} \ar@{-}[rd] \ar@{-}[ld] & \bullet_{1,3} \\ \bullet_{2,1} \ar@{-}[r] & \bullet_{2,2} \ar@{-}[r] & \bullet_{2,3}} \hspace{1cm} H^\tau = \xymatrix{\bullet_{1,1} & \bullet_{1,2} \ar@{-}[d] \ar@{-}[ld] & \bullet_{1,3} \\ \bullet_{2,1} \ar@{-}[r] & \bullet_{2,2} \ar@{-}[r] & \bullet_{2,3}}$$
			are not isomorphic. Computing the eigenvalues of $G^\tau$ and $H^\tau$ it can be easily verified that they are not co-spectral.
		\end{example}

		Several approches are introduced in literature for the construction of co-spectral graphs. For example, see \cite{cvetkovic1988recent}, \cite{fujii1999isospectral}, \cite{halbeisen1999generation}, \cite{rowlinson1996characteristic}, \cite{godsil1982constructing}, \cite{knuth1997partitioned} and the references therein. Schwenk et al. investigated the problem of construction of cospectral graphs for a few structured graphs using the concept of bigraphs and the characteristic polynomial of a graph, for example see \cite{schwenk1979construction}, \cite{schwenk1973almost}. Indeed, observe that these techniques are proposed for graphs with a very specific structure and hence the applicability of these techniques is limited. In this paper, we show that the GTPT approach generates a pair of cospectral graphs from any given bipartite graph, and for a graph on a composite number of verices it opens up a possibility of generation of a cospectral mate of the given graph. Note that given a graph on $n$ vertices, where $n$ is a composite number, multiple clustered graphs can be obtained by multiple factorizations of $n.$ Hence  multiple GTPT equivalent graphs can be obtained for the same graph. Besides, since the GTPT approach acts like a matrix function on the algebra of (weighted) adjacency matrices, the matrix theoretic results can be used to determine specific properties of an adjacency matrix to ensure cospectrality. Here we mention that Willem H. Haemers attempted to investigate the Seidel switching operation as a matrix operation  in \cite{haemers2012seidel,haemers2004enumeration}.
		
		The main contributions of this paper are as follows. We determine classes of graphs for which GTPT approach gurantees to produce cospectral graphs, for example bipartite graphs and pseudo bipartite graphs defined in Section \ref{Sec:2}. By using matrix theoretic arguments we prove that if the blocks of the adjacency matrix of a clustered graph form a set of normal commuting matrices then its GTPT equivalent graph is cospectral. Further, using the recently developed graph structure corresponding to such adjacency matrices we demonstrate the class of graphs which ascertain cospectrality with its GTPT. In Section \ref{Sec:3} we propose procedures to create new GTPT equivalent cospectral graphs by utilizing GTPT equivalent cospectral graphs. These procedures can be used to generate large cospectral graphs. Finally, in Section \ref{Sec:4}, we produce  several GTPT equivalent non-isomorphic cospectral graphs by employing the procedures introduced in Section \ref{Sec:3}. Thus we establish that GTPT approach can act as potential method for generation of non-isomorphic cospectral graphs. Then we conclude this article with some future research problems in this direction.

	\section{Constructing cospectral graphs}\label{Sec:2}
		
		In this section, we determine classes of GTPT equivalent graphs that are cospectral. Before that we recall GM-switching as an opertion on certain type of matrices as described in \cite{haemers2004enumeration}. We also show that partial transpose of the adjacency matrix of a clustered graph bears a partial resemblance of GM-switching.
		
		First we recall the following theorem which explains GM-switching as a matrix operation. Let $\textbf{1}$ denote the all-one vector.
		
		\begin{theorem}\label{thm:gm}
			\cite{haemers2004enumeration} Let $N$ be a $(0, 1)$-matrix of size $b \times c$ (say) whose column sums are $0, b$ or $b/2$. Define $\widetilde{N}$ to be the matrix obtained from $N$ by replacing each column $v$ with $b/2$ ones by its complement $\textbf{1}-v$. Let $B$ be a symmetric $b\times b$ matrix with constant row (and column) sums, and let $C$ be a symmetric $c \times c$ matrix. Put
			$$M = \begin{bmatrix}
			B & N\\ N^t & C
			\end{bmatrix}
			\,\, \mbox{and} \,\, \widetilde{M} = \begin{bmatrix}
			B & \tilde{N}\\ \tilde{N}^t & C
			\end{bmatrix}.$$ Then $M$ and $\widetilde{M}$ are cospectral.
		\end{theorem}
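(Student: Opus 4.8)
The plan is to exhibit an explicit orthogonal matrix $Q$ that conjugates $M$ to $\widetilde{M}$, thereby showing the two matrices are similar and hence cospectral. The natural candidate is a block-diagonal matrix $Q = \diag(P, I_c)$, where $P$ is a $b \times b$ orthogonal matrix acting on the $B$-block and $I_c$ is the identity on the $C$-block. For GM-switching the standard choice is $P = \frac{2}{b}J_b - I_b$, the reflection through the all-one vector $\mathbf{1}$ (equivalently $P = I_b - \frac{2}{b}(\mathbf{1}\mathbf{1}^t - \ldots)$; the key properties are $P = P^t$, $P^2 = I_b$, $P\mathbf{1} = \mathbf{1}$, and for any $(0,1)$-vector $v$ with exactly $b/2$ ones, $Pv = \mathbf{1} - v$). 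First I would record these four properties of $P$ and verify them by direct computation.

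Next I would compute the conjugation $Q M Q^t$ blockwise. The top-left block becomes $PBP^t$; since $B$ has constant row and column sums, $B\mathbf{1} = r\mathbf{1}$ for some scalar $r$, and because $B$ is symmetric this forces $\mathbf{1}$ to be an eigenvector, from which $PBP = B$ follows (this is the step where the constant-row-sum hypothesis on $B$ is essential). The bottom-right block is $I_c C I_c = C$, unchanged. The off-diagonal block becomes $P N$: here I would argue column by column. A column of $N$ has column sum $0$, $b$, or $b/2$; in the first two cases the column is $\mathbf{0}$ or $\mathbf{1}$, both fixed by $P$ (using $P\mathbf{0}=\mathbf{0}$ and $P\mathbf{1}=\mathbf{1}$), and in the third case $Pv = \mathbf{1} - v$ by the switching property. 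Hence $PN = \widetilde{N}$ exactly as defined in the statement, and by symmetry the lower-left block is $\widetilde{N}^t$. Assembling the blocks gives $Q M Q^t = \widetilde{M}$.

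Finally, since $Q$ is orthogonal ($P$ and $I_c$ are each orthogonal and $Q$ is block-diagonal), $\widetilde{M} = Q M Q^{-1}$, so $M$ and $\widetilde{M}$ have the same characteristic polynomial and therefore the same spectrum, completing the proof. I expect the main obstacle — or at least the only point requiring genuine care rather than bookkeeping — to be the verification that $PBP = B$: one must use symmetry of $B$ together with the constant-row-sum condition to conclude $\mathbf{1}$ spans an invariant line and that $P$ commutes with $B$ on both the span of $\mathbf{1}$ and its orthogonal complement. Everything else is a routine blockwise matrix computation, and one should also double-check the edge cases (columns summing to $0$ or $b$) so that the description of $\widetilde{N}$ matches $PN$ on the nose.
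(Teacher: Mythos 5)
Your proposal is correct: the conjugating matrix $Q=\diag\bigl(\tfrac{2}{b}J_b-I_b,\,I_c\bigr)$ is orthogonal and involutory, fixes $B$ because of the constant row-sum hypothesis, fixes the all-zero and all-one columns of $N$, and sends each column with $b/2$ ones to its complement, so $QMQ^{-1}=\widetilde{M}$. The paper itself offers no proof of Theorem \ref{thm:gm} — it is recalled verbatim with a citation to \cite{haemers2004enumeration} — and your argument is exactly the standard Godsil--McKay/Haemers--Spence proof given in that reference, so there is nothing to reconcile.
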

		
		Observe from Theorem \ref{thm:gm} that considering a graph $G$ as a clustered graph with $2$ clusters and $M$ as its adjacency matrix, the matrices $B$ and $C$ represent the adjacency matrices of the clusters and $N$ captures the adjacency relations between vertices in $B$ and $C.$ The switching of $G$ is produced by removing and adding some edges which connect the vertices in $B$ with the vertices in $C.$ Whereas, in contrast to GM-switching, a  graph $G$ can have any number of clusters depending on the number of vertices in $G$ for GTPT and each cluster should have same number of vertices. Alike GM-switching the only removal/addition of edges are done in GTPT only for the edges which link the vertices between any two clusters.
		
		\subsection{Bipartite and pseudo-bipartite graphs} 
		
			Recall that a bipartite graph is a clustered graph with $2$ clusters. Now we define pseudo-bipartite graphs as follows.
			
			\begin{definition}(Pseudo-bipartite graph) 
				Let $G = (V(G), E(G))$ be a clustered graph on $2m$ vertices having two clusters say $C_i=\{v_{i,j} : j=1,\dots, m\}, i=1,2$ each of which contains $m$ vertices. Then $G$ is said to be a pseudo-bipartite graph if $C_1$ and $C_2$ are isomorphic and the `identity' map $Id:V(C_1)\rightarrow V(C_2)$ defined as $Id(v_{1,j})=v_{2,j}$ acts as the isomorphism. 
			\end{definition}
			
			Then we have the following theorem.
			
			\begin{theorem}\label{Thm:psdbi}
				The GTPT equivalent pseudo-bipartite graphs are isomorphic and hence co-spectral. In particular, GTPT equivalent bipartite graphs are isomorphic when the clusters in the bipartite contain same number of vertices.
			\end{theorem}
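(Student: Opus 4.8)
The plan is to exhibit an explicit isomorphism between $G$ and $G^\tau$; cospectrality is then automatic, since the two adjacency matrices are permutation similar and hence share their eigenvalues. Write $\textbf{A}(G)$ in the block form of \eqref{bmatrix} with $n=2$,
$$\textbf{A}(G) = \begin{bmatrix} A_{1,1} & A_{1,2} \\ A_{2,1} & A_{2,2} \end{bmatrix},$$
with $m \times m$ blocks. Two structural facts feed the argument: (i) symmetry of $\textbf{A}(G)$ forces $A_{1,1}$ and $A_{2,2}$ to be symmetric and $A_{2,1}=A_{1,2}^t$; and (ii) the pseudo-bipartite hypothesis — that $Id(v_{1,j})=v_{2,j}$ is an isomorphism of the clusters $C_1$ and $C_2$ — is precisely the statement $A_{1,1}=A_{2,2}$.

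First I would feed (i) and (ii) into the identity $\textbf{A}(G^\tau)=[A_{i,j}^t]$ to obtain
$$\textbf{A}(G^\tau) = \begin{bmatrix} A_{1,1} & A_{1,2}^t \\ A_{1,2} & A_{1,1} \end{bmatrix}.$$
Then I would take the permutation matrix $P=\begin{bmatrix} 0 & I_m \\ I_m & 0 \end{bmatrix}$, which swaps the two clusters ($v_{1,j}\leftrightarrow v_{2,j}$), and compute
$$P^t \textbf{A}(G) P = \begin{bmatrix} A_{2,2} & A_{2,1} \\ A_{1,2} & A_{1,1} \end{bmatrix} = \begin{bmatrix} A_{1,1} & A_{1,2}^t \\ A_{1,2} & A_{1,1} \end{bmatrix} = \textbf{A}(G^\tau),$$
the middle step using $A_{2,2}=A_{1,1}$ and $A_{2,1}=A_{1,2}^t$ once more. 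Hence $G\cong G^\tau$, and they are cospectral. Equivalently, one can argue combinatorially: the cluster-swap sends each intra-cluster edge of $G$ to an intra-cluster edge of $G^\tau$ by pseudo-bipartiteness, and sends an inter-cluster edge $(v_{1,k},v_{2,l})$ to $(v_{1,l},v_{2,k})$, which is exactly the edge Definition \ref{def:1} places in $G^\tau$.

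For the last sentence of the statement: if $G$ is bipartite with its two colour classes of equal size $m$ taken as the clusters, then $A_{1,1}=A_{2,2}=0$, so the clusters are empty graphs on $m$ vertices, trivially isomorphic via the identity; thus $G$ is pseudo-bipartite and the first part applies. I do not expect a genuine obstacle here — the lemma reduces to a short computation. The only point needing care is aligning the transposition of the off-diagonal blocks produced by GTPT with the swap of blocks produced by conjugating with $P$; these match precisely because symmetry gives $A_{2,1}=A_{1,2}^t$ and the pseudo-bipartite hypothesis gives $A_{1,1}=A_{2,2}$.
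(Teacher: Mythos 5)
Your proposal is correct and is essentially the paper's own argument: the paper also uses the cluster swap $f(v_{1,j})=v_{2,j}$, $f(v_{2,j})=v_{1,j}$ as the isomorphism between $G$ and $G^\tau$, verifying it edge by edge (intra-cluster edges via the pseudo-bipartite hypothesis, inter-cluster edges by the GTPT rule), which is exactly what your conjugation by $P=\begin{bmatrix}0 & I_m\\ I_m & 0\end{bmatrix}$ encodes at the matrix level via $A_{2,1}=A_{1,2}^t$ and $A_{1,1}=A_{2,2}$. If anything, your block computation spells out the inter-cluster step that the paper dismisses with ``as in the case of bipartite graphs,'' but the underlying isomorphism and strategy are the same.
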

			
			\begin{proof}
				Let $G$ be a pseudo-bipartite graph on $2m$ vertices where the the clusters $C_1$ and $C_2$ of $G$ are isomorphic and $Id: V(C_1)\rightarrow V(C_2)$ is the isomorphism. Let $C_1=\{v_{1,i}: i=1,\hdots,m\}$ and label the vertices of $C_2$ by $v_{2,i}=Id(v_{1,i}), i=1,\hdots,m.$ Observe that any edge of the type $(v_{i,j}, v_{i,k}), i=1,2$ and $j\neq k$ does not get affected in the formation of $G^\tau.$ Now consider the function $f: V(G)\rightarrow V(G^\tau)$ as defined by \begin{equation}\label{def:f}
				f(v_{i,j}) = \begin{cases} v_{2,j} &~\mbox{for}~ i = 1, \\ v_{1,j} &~\mbox{for}~ i = 2. \end{cases}
				\end{equation} Then $(v_{1,i}, v_{1,j})\in E(G)$ implies $(f(v_{1,i}), f(v_{1,j}))=(v_{2,i}, v_{2,j})=(Id(v_{1,i}), Id(v_{1,j}))\in E(G^\tau)$ since $Id$ is an isomorphism. Similarly, $(v_{2,i}, v_{2,j})\in E(G)$ means $(f(v_{2,i}),$ $ f(v_{2,j})) = (v_{1,i}, v_{1,j})\in E(G^\tau)$ since $(v_{2,i}, v_{2,j}) = (Id(v_{1,i}), Id(v_{1,j})) \in E(G)$ and $\psi$ is isomorphism. For other edges of the form $(v_{1,i}, v_{2,j})$ the result follows as in the case of bipartite graphs. Hence the proof.
			\end{proof}
			
			We mention that the Theorem \ref{Thm:psdbi} is no more true when the pseudo-bipartite graph is replaced by a graph with $2$ isomorphic clusters say $C_1, C_2$ and the isomorphism is not the identity map. For instance consider the GTPT equivalent graphs in Figure \ref{counterexamples} in which spectrum of $G$ is $ \{2,  1.618, -2, -1.618,  0.618, -0.618, 0, 0\}$ and spectrum of $G^\tau$ is $\{-2.1490, -1.5434, 2.149, 1.5434, 0, 0, 0, 0\}.$
			
			\begin{figure}
				\centering
				\subfloat[$G$.]{%
					\resizebox*{5cm}{!}{
						\begin{tikzpicture}[scale = .8]
						\node at (0,0) {$\bullet_{21}$};
						\node at (2,0) {$\bullet_{22}$};
						\node at (4,0) {$\bullet_{23}$};
						\node at (6,0) {$\bullet_{24}$};
						\node at (0,2) {$\bullet_{11}$};
						\node at (2,2) {$\bullet_{12}$};
						\node at (4,2) {$\bullet_{13}$};
						\node at (6,2) {$\bullet_{14}$};
						\draw (-.1, .05) -- (1.9, .05);
						\draw {[rounded corners] (-.1, .05) --(2, -.25) -- (3.9, .05)};
						\draw {[rounded corners] (-.1, .05) --(3, -.6) -- (5.9, .05)};
						\draw (-.1, 2.05) -- (1.9, 2.05);
						\draw (1.9, 2.05) -- (3.9, 2.05);
						\draw {[rounded corners] (1.9, 2.05) -- (3.9, 2.25) -- (5.9, 2.05)};
						\draw (5.9, 2.05) -- (1.9, .05);
						\end{tikzpicture}
				}}\hspace{5pt}
				\subfloat[$G'$.]{%
					\resizebox*{5cm}{!}{
						\begin{tikzpicture}[scale = .8]
						\node at (0,0) {$\bullet_{21}$};
						\node at (2,0) {$\bullet_{22}$};
						\node at (4,0) {$\bullet_{23}$};
						\node at (6,0) {$\bullet_{24}$};
						\node at (0,2) {$\bullet_{11}$};
						\node at (2,2) {$\bullet_{12}$};
						\node at (4,2) {$\bullet_{13}$};
						\node at (6,2) {$\bullet_{14}$};
						\draw (-.1, .05) -- (1.9, .05);
						\draw {[rounded corners] (-.1, .05) -- (2, -.25) -- (3.9, .05)};
						\draw {[rounded corners] (-.1, .05) -- (3, -.6) -- (5.9, .05)};
						\draw (-.1, 2.05) -- (1.9, 2.05);
						\draw (1.9, 2.05) -- (3.9, 2.05);
						\draw {[rounded corners] (1.9, 2.05) -- (3.9, 2.25) -- (5.9, 2.05)};
						\draw (1.9, 2.05) -- (5.9, .05);
						\end{tikzpicture}
				}}
				\caption{The graph $G$ has two isomorphic clusters and $G,$ $G^\tau$ are not cospectral mates} 
				\label{counterexamples}
			\end{figure}
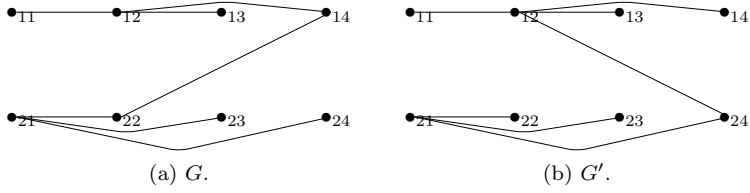
			
			Now we have the following resut for bipartite graphs. 
			\begin{theorem}\label{Thm:bipp}
				Let $G$ be a bipartite graph such that the vertex set is partitioned as $V(G)=C_1\sqcup C_2$ where $|C_1|=m_1\neq m_2=|C_2|.$ Then $G$ and $G^\tau$ are isomorphic.
			\end{theorem}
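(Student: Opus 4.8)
The plan is to write the adjacency matrices of $G$ and $G^{\tau}$ in block form with respect to the bipartition and then read off an isomorphism directly. Since $G$ is bipartite with the clustering $V(G)=C_1\sqcup C_2$, there are no edges inside either cluster, so with $N$ denoting the $m_1\times m_2$ biadjacency matrix of $G$ we have
\[
\mathbf{A}(G)=\begin{bmatrix} 0_{m_1} & N\\ N^{t} & 0_{m_2}\end{bmatrix}.
\]
Because the two diagonal blocks vanish, GTPT does nothing but transpose the single nonzero block; consequently $G^{\tau}$ is again bipartite, now with one part of size $m_2$ and one of size $m_1$, and
\[
\mathbf{A}(G^{\tau})=\begin{bmatrix} 0_{m_2} & N^{t}\\ N & 0_{m_1}\end{bmatrix}.
\]
In other words $G^{\tau}$ is nothing more than $G$ with the two colour classes interchanged, which already makes the isomorphism intuitively clear.

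To make this precise I would exhibit the bijection $f\colon V(G)\to V(G^{\tau})$ that sends the $i$-th vertex of $C_1$ to the $i$-th vertex of the size-$m_1$ part of $G^{\tau}$ and the $j$-th vertex of $C_2$ to the $j$-th vertex of the size-$m_2$ part of $G^{\tau}$ — the analogue, for unequal parts, of the map used in the proof of Theorem \ref{Thm:psdbi}. In matrix terms $f$ is the permutation matrix
\[
P=\begin{bmatrix} 0_{m_2\times m_1} & I_{m_2}\\ I_{m_1} & 0_{m_1\times m_2}\end{bmatrix},
\]
and a one-line block multiplication verifies $P\,\mathbf{A}(G)\,P^{t}=\mathbf{A}(G^{\tau})$; equivalently, $(v_{1,i},v_{2,j})\in E(G)$ iff $N_{ij}=1$ iff the $(j,i)$-entry of the biadjacency matrix $N^{t}$ of $G^{\tau}$ equals $1$, i.e. iff $f(v_{1,i})$ and $f(v_{2,j})$ are adjacent in $G^{\tau}$. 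Hence $G\cong G^{\tau}$, and since isomorphic graphs are cospectral, $G$ and $G^{\tau}$ are cospectral as well.

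There is no real obstacle here; the only point requiring attention is the size mismatch $m_1\neq m_2$, which means — unlike the pseudo-bipartite case — that $P$ is not an involution on a single vertex set, so one must keep the row- and column-block dimensions of $P$, $\mathbf{A}(G)$ and $\mathbf{A}(G^{\tau})$ consistent throughout the block computation, and should make explicit, since Definition \ref{def:1} is literally stated for clusters of equal size, that for a bipartite graph GTPT is unambiguously ``transpose the off-diagonal block,'' an operation that merely swaps the cardinalities of the two sides. It is worth noting that this result, together with Theorem \ref{Thm:psdbi}, shows that a bipartite graph can never produce a \emph{non-isomorphic} cospectral mate via GTPT, which is precisely why the nontrivial cospectral constructions in the later sections start from graphs that are not bipartite.
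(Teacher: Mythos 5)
Your argument is correct: the identity $P\,\mathbf{A}(G)\,P^{t}=\mathbf{A}(G^{\tau})$ with your block permutation matrix checks out, and the vertex map it encodes (interchange the two colour classes) is exactly the isomorphism underlying the paper's result. The route, however, is genuinely different in how the size mismatch is treated. The paper never works with the rectangular block: it pads $C_1$ with $m_2-m_1$ isolated vertices to force an equal bipartition, invokes Theorem \ref{Thm:psdbi} on the padded graph, notes that the resulting isomorphism must carry isolated vertices to isolated vertices, and then deletes them. That padding step is also how the paper implicitly resolves the definitional point you flag: Definition \ref{def:1} literally requires clusters of equal size, so for $m_1\neq m_2$ the object $G^{\tau}$ only acquires a meaning through the padded graph. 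Your convention (``transpose the biadjacency block, swapping the part sizes'') is consistent with this: after GTPT the padded isolated vertices reappear on the $C_2$ side, and deleting them leaves precisely the bipartite graph with biadjacency matrix $N^{t}$ that you call $G^{\tau}$. What your approach buys is a self-contained, explicit permutation-matrix argument with no padding, and it avoids the mild imprecision in the paper's phrase ``removing the isolated vertices'' (which, read literally, would also strip any isolated vertices the original $G$ happens to have); what the paper's approach buys is that it stays within the letter of Definition \ref{def:1} and reuses Theorem \ref{Thm:psdbi} verbatim.

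One caveat about your closing remark: Theorems \ref{Thm:psdbi} and \ref{Thm:bipp} only assert that GTPT applied to the clustering by the two colour classes returns an isomorphic graph. A bipartite graph clustered differently can very well produce a non-isomorphic cospectral mate; for instance, the graphs $H$ of Model 2 in Section \ref{Sec:4} (empty clusters $C_1,C_3$, a bipartite graph between $C_1$ and $C_2$, a perfect matching between $C_2$ and $C_3$) are bipartite as abstract graphs with colour classes $C_2$ and $C_1\cup C_3$, yet the paper reports many non-isomorphic cospectral GTPT mates among them. So the assertion that bipartite graphs ``can never'' yield non-isomorphic mates via GTPT should be deleted or restricted to the two-cluster partition into colour classes.
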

			\begin{proof}
				Without loss of generality assume that $m_1 < m_2$. Then add $(m_2 - m_1)$ adhoc isolated vertices into the set $C_1$ such that the resultant graph becomes a bipartite graph with equal partition. For brevity we use the same notation for the modified graph as $G$. Then by using Theorem \ref{Thm:psdbi} $G$ and $G^\tau$ are isomorphic and the isomorphism must correspond the isolated vertices in $G$ to isolated vertices in $G^\tau.$ Hence by removing the isolated vertices from $G$ and $G^\tau$ the desired result follows.
			\end{proof}
			
		\subsection{Construction of cospectral GTPT equivalent graphs}
			
			As mentioned in the introduction recall that two GTPT equivalent graphs need not be cospectral. In this subsection we provide a sufficient condition on the stuructual properties of a clustered graph $G$ so that the graph $G^\tau$ becomes cospectral with $G.$ First we prove the following theorem.
			
			\begin{theorem}\label{Thm:solution}
				Let $\{A_i, i=1,\hdots, k\}$ be a commuting family of normal  real matrices of order $m$. Then there exists a nonsingular matrix $X$ such that $A_i^t = X^{-1}A_iX$ for $i=1,\hdots, k.$ 
			\end{theorem}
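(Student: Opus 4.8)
The plan is to diagonalize the whole family simultaneously and then transpose it block by block using a single conjugating matrix. Viewing the $A_i$ over $\mathbb{C}$, they form a commuting family of normal matrices, so by the spectral theorem there is a unitary matrix $U$ with $U^{*}A_iU=D_i$ diagonal for every $i$. Since each $A_i$ is real, complex conjugation carries the joint eigenspace $V_\lambda$ (for a joint eigenvalue $\lambda=(\lambda_1,\dots,\lambda_k)$) onto $V_{\bar\lambda}$. I would use this to replace $U$ by a \emph{real orthogonal} matrix $Q$: on each conjugation-stable $V_\lambda$ (all $\lambda_i\in\mathbb{R}$) choose a real orthonormal eigenbasis, and for each pair $V_\lambda\oplus V_{\bar\lambda}$ with $\lambda$ non-real, choose an orthonormal basis $w_1,\dots,w_d$ of $V_\lambda$ and form the real orthonormal vectors $\tfrac{1}{\sqrt{2}}(w_j+\bar w_j)$ and $\tfrac{1}{i\sqrt{2}}(w_j-\bar w_j)$. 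A direct computation shows that in this basis each $A_i$ becomes the \emph{same} real block-diagonal matrix $B_i:=Q^{t}A_iQ$, whose diagonal blocks (with a common partition independent of $i$) are either $1\times1$, equal to a real joint eigenvalue, or $2\times2$ of the form $\bigl[\begin{smallmatrix} a_i & b_i\\ -b_i & a_i\end{smallmatrix}\bigr]$, where $\lambda_i=a_i+ib_i$.

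The key point is that every such block is carried to its transpose by an explicit orthogonal involution that is independent of the block entries: a $1\times1$ block is symmetric, and the matrix $J_2=\diag(1,-1)$ satisfies $J_2\bigl[\begin{smallmatrix} a & b\\ -b & a\end{smallmatrix}\bigr]J_2=\bigl[\begin{smallmatrix} a & b\\ -b & a\end{smallmatrix}\bigr]^{t}$ with $J_2^{2}=I$. Let $J$ be the orthogonal block-diagonal matrix (for the common partition) carrying the entry $1$ on each $1\times1$ block and $J_2$ on each $2\times2$ block, so that $JB_iJ=B_i^{t}$ for all $i$ and $J^{2}=I$. Put $X:=QJQ^{t}$, which is real, orthogonal and an involution, hence nonsingular. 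Using $Q^{t}Q=I$ and $J^{2}=I$ we get $X^{-1}A_iX=(QJQ^{t})(QB_iQ^{t})(QJQ^{t})=Q(JB_iJ)Q^{t}=QB_i^{t}Q^{t}=(QB_iQ^{t})^{t}=A_i^{t}$ for every $i$, which is the claim.

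The step that needs genuine care is the passage from simultaneous unitary diagonalization to simultaneous real orthogonal block-diagonalization, namely checking that the conjugation-symmetrized vectors form a real orthonormal basis and bring \emph{every} $A_i$ into the stated $2\times2$-block shape with one common partition; everything else is routine. An alternative that avoids the real canonical form is to observe directly from $U^{*}A_iU=D_i$ and the reality of $A_i$ that $X_0:=\bar U\,U^{*}$ already satisfies $X_0^{-1}A_iX_0=A_i^{t}$; this $X_0$ is nonsingular but only satisfies $\bar X_0=X_0^{-1}$, and one recovers a genuinely real $X$ from the fact that the real-linear system $A_iY=YA_i^{t}$, $i=1,\dots,k$, has a nonsingular complex solution and therefore a nonsingular real one, since $t\mapsto\det(P+tQ)$ is a not-identically-zero polynomial when $X_0=P+iQ$ with $P,Q$ real solutions.
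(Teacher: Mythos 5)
Your proof is correct, but it follows a genuinely different route from the paper's. The paper also starts from the simultaneous unitary diagonalization $A_i=UD_iU^{*}$ of the commuting normal family, but it then recasts the intertwining condition $XA_i^{t}=A_iX$ as a vectorized Kronecker system $(I_m\otimes A_i-A_i^{t}\otimes I_m)\vc(X)=0$, conjugates that system by $\overline{U}\otimes U$ to make it diagonal, selects the kernel vector supported on the ``diagonal'' coordinates, and finally reassembles $X$ and proves nonsingularity by a separate linear-independence argument on its columns. You instead pass to the simultaneous \emph{real orthogonal} quasi-diagonalization of the family (your conjugation-symmetrized basis is the standard proof of that step, and your computation of the $2\times 2$ blocks $\bigl[\begin{smallmatrix} a_i & b_i\\ -b_i & a_i\end{smallmatrix}\bigr]$ with a common partition is right), and then conjugate by the fixed signature matrix $J$; the identity $J_2\bigl[\begin{smallmatrix} a & b\\ -b & a\end{smallmatrix}\bigr]J_2=\bigl[\begin{smallmatrix} a & b\\ -b & a\end{smallmatrix}\bigr]^{t}$ and the verification $X^{-1}A_iX=A_i^{t}$ with $X=QJQ^{t}$ are correct. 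What your route buys is a stronger, cleaner conclusion -- $X$ is real, orthogonal, symmetric and involutive -- with no vectorization and no extra nonsingularity bookkeeping; what the paper's route buys is an explicit solution framework for the Lyapunov-type system, though it requires careful tracking of transposes and a dedicated argument that the assembled $X$ is invertible. Your closing remark is also correct and is in substance the paper's construction stripped of the Kronecker formalism: $X_0=\overline{U}U^{*}$ (equivalently $UU^{t}$) already intertwines each real $A_i$ with $A_i^{t}$, and the determinant-polynomial argument legitimately upgrades it to a real nonsingular solution, although the theorem as stated does not actually require $X$ to be real.
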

			
			\begin{proof} 
				Note that the problem of finding a nonsinglar matrix $X$ for which $A_i^t = X^{-1}A_iX$ is equivalent to solving a syetem of Lyapunov equations $XA_i^T=A_iX, i=1,\hdots, k.$ This implies that it is enough to find a nontrivial solution for the set of linear systems of the form 
				\begin{equation}\label{Kroneqn} 
					(I_m\otimes A_i -A_i^t\otimes I_m)x=0, \,\,  i=1,\hdots, k 
				\end{equation} 
				such that $x=\vc(X)=[X_1^t \, X_2^t \, \hdots, X_m^t]^t,$ the vectorizaton of some nonsingular matrix $X$ whose $j$th column is $X_j, j=1,\hdots, m$, and $I_m$ is the identity matrix of order $m$.
				
				Recall that a commuting family of normal matrices is simultaneously unitarily diagonalizable [Theorem 2.5.5, \cite{horn2012matrix}], that is, there exists a unitary matrix $U$ such that $$A_i=U^*D_iU$$ where $D_i$ is a diagonal matrix (not necessarily real) and $U^*$ denotes the conjugate transpose of $U.$ Hence $A_i^t=U^tD_i\overline{U}, i=1,\hdots, k.$ Let $S(A_i)=I_m \otimes A_i - A_i^t \otimes I_m.$ Then
				\begin{eqnarray}
					S(A_i) &=& I_m \otimes U^* D_i U - U^t D_i \overline{U} \otimes I_m \nonumber \\
					& = & (U^t I_m \overline{U})\otimes (U^* D_i U) - (U^t D_i \overline{U}) \otimes (U^* I_m U) \nonumber \\
					& =& (U^t \otimes U^*) (I \otimes D_i - D_i \otimes I) (\overline{U} \otimes U) \nonumber\\
					&=& (\overline{U} \otimes U)^* (I \otimes D_i - D_i \otimes I) (\overline{U} \otimes U)
				\end{eqnarray} 
				where $\overline{U}=[\overline{u}_{ij}]$ if $U=[u_{ij}].$ 
				
				Let $\mathcal{U} = (\overline{U} \otimes U)$. Then  $\mathcal{U} \mathcal{U}^* = (\overline{U} \otimes U)(U^t \otimes U^*) = (\overline{U} U^t) \otimes (U U^*) = I_{m^2},$ that is $\mathcal{U}$ is a unitary matrix. Then observe that the equation (\ref{Kroneqn}) takes the form, $S(D_i)\mathcal{U} x = 0$. Denoting $y = \mathcal{U} x$ we get, $S(D_i)y = 0, i=1,\dots, k$. 
				
				Note that, $y$ is a vector of order $m^2$. Setting 
				$$\hat{y}_j = (y_{(j - 1)m + 1} , y_{(j - 1)m + 2} , \dots , y_{jm})^t \in \mathbb{R}^m,  j= 1, \dots, m$$ 
				we obtain $y = (\hat{y}_1, \hat{y}_2, \dots \hat{y}_m)^t$. Suppose $D_i = \diag\{\lambda_1^{(i)}, \lambda_2^{(i)}, \dots, \lambda_m^{(i)}\}$. Then $I_m \otimes D_i = \diag\{D_i, D_i, \dots D_i\}$, a block diagonal matrix of order $m$, and $D_i \otimes I_m = \diag\{\lambda_1^{(i)} I_m, \lambda_2^{(i)} I_m, \dots \lambda_m^{(i)} I_m\}$. Observe that in each of the diagonal matrices $S(D_i)$ at least $m$ diagonal entries are zero. They are at the position $1, m + 2, 2m + 3, \dots, m^2$ of the diagonal. Thus $1, m + 2, 2m + 3, \dots, m^2$-th entries of the vector $y$ may be arbitrary scalars and other entris of $y$ chosen to be $0$ provide a solution of $S(D_i)y=0, i=1,\dots, k$. Moreover $\{\hat{y}_1, \hat{y}_2, \dots \hat{y}_m\}$ forms a set of linearly independent vectors. In particular, we may chose $\hat{y}_i = e_i$, the $i$-th column vector of the identity matrix $I_m$.
				
				Finally, 
				$$x=\mathcal{U}^*y=(U^t\otimes U^*)y=\begin{bmatrix} \sum_{j=1}^m u_{j1}U^*\hat{y}_j \\ \vdots \\ \sum_{j=1}^m u_{jm}U^*\hat{y}_j \end{bmatrix}.$$ 
				Set $X_l=\sum_{j=1}^m u_{jl}U^*\hat{y}_j, l=1,\dots, m.$ Then the matrix $X=[X_1 \, X_2 \,\dots \, X_m]$ for which $\vc(X)=x$ is a nonsingular matrix. Indeed we show that $\{X_1, \dots, X_m\}$ forms a set of linearly independent vectors as follows. Let $\alpha_1 X_1 + \alpha_2 X_2 + \dots + \alpha_m X_m =0$ for some scalars $\alpha_1,\dots,\alpha_m.$ This implies
				$$\left(\sum_{j=1}^{m} \alpha_ju_{j1}\right) \hat{y}_1 + \dots + \left(\sum_{j=1}^{m} \alpha_ju_{jm} \right)\hat{y}_m=0$$ 
				which implies $\alpha_j=0, j=1,\dots, m$ since $\{\hat{y}_j : j=1,\dots,m\}$ is a linearly independent set. This completes the proof.
			\end{proof}
			
			Thus, it follows from the above theorem that for a clustered graph $G$ if the blocks $A_{ij}, 1\leq i,j\leq n$ of its adjacency matrix (\ref{bmatrix}) form a set of commuting normal matrices then any GTPT equivalent graph of $G$, for example, $G^\tau$ is a cospectral mate of $G.$ This calls for identifying structural properties of a clustered graph for which this sufficient condition is met. Note that the specific structure of the clusters represented by $A_{i,i}$ (the diagonal blocks of $\textbf{A}(G)$) and the structure of bipartite graphs constituted by the partitioned vertex sets of any two clusters say $C_i, C_j, i\neq j$ and edges between them determine the commuting normality property of $A_{i,j}, 1\leq i,j\leq n.$ We denote $\langle C_i,C_j\rangle$ for the bipartite subgraph of $G$ whose adjacency matrix is given by 
			$$\begin{bmatrix} 0 & A_{i,j} \\ A_{i,j}^t & 0 \end{bmatrix}$$ 
			when $i\neq j.$ If $i=j,$  $\langle C_i,C_i\rangle$ represents the subgraph induced by the $i$th cluster $C_i$ whose adjacency matrix is the diagonal block $A_{i,i}$ of $\textbf{A}(G).$ 

			A graph theoretic characterization of commuting normal blocks of a block matrix is devised in \cite{dutta2017quantum, dutta2017zero} recently. Indeed we summerize the properties obtained in \cite{dutta2017quantum} in the following theorem.

			\begin{theorem}\label{Thm:CommNor}
				Let $G$ be a clustered graph with the clusters $C_i=\{v_{i,1}, v_{i,2}, \dots, v_{i,m}\}$, $i=1,\dots, n.$ For any bipartite graph $\langle C_i, C_j\rangle$ we define  neighborhood index set of a vertex $v_{i,\alpha}$ in $C_i$ with respect to $C_j$ as	
				\begin{equation} 
				\nbd_{C_j}(v_{i,\alpha}) =\{\beta : v_{i,\alpha} \, \mbox{and} \,\,  v_{j,\beta} \, \mbox{are adjacent}\, \}\subseteq \{1,2,\dots,m\}
				\end{equation} 
				where $\alpha,\beta\in\{1,2,\dots, m\}$ and $j\in\{1,2,\dots,n\}.$ Then the blocks of the adjacency matrix $\textbf{A}(G)$  form a set of commuting normal matrices if the following conditions are satisfied. 
				\begin{enumerate} 
					\item 
					(Commuting condition, Theorem 1, \cite{dutta2017quantum}) For any two graphs $\langle C_{i_1}, C_{j_1}\rangle,$ and $\langle C_{i_2}, C_{j_2}\rangle,$ 
					$$\left| \nbd_{C_{j_1}}(v_{i_1, \alpha}) \cap \nbd_{C_{i_2}}(v_{j_2, \beta})\right| = \left|\nbd_{C_{i_1}}(v_{j_1, \beta}) \cap \nbd_{C_{j_2}}(v_{i_2, \alpha})\right|$$ where $1\leq \alpha, \beta \leq m, 1\leq i_1,i_2,j_1,j_2\leq n.$ 
					\item 
					(Normality condition, Theorem 2, \cite{dutta2017quantum}) The number of common neighbors in $C_j$ of any pair of vertices $v_{i,\alpha}, v_{i,\beta}$ in $C_i, i\neq j$ is same as the number of common neighbors in $C_i$ of the pair of vertices $v_{j,\alpha}, v_{j,\beta}$ in $C_j.$
				\end{enumerate}
			\end{theorem}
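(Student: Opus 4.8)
The plan is to translate the two combinatorial hypotheses into matrix identities for the blocks $A_{i,j}$ of $\textbf{A}(G)$ and then read off commutativity and normality directly from the definitions. The bridge between the two pictures is the observation that, because $G$ is an undirected simple graph, $A_{j,i}=A_{i,j}^t$ for all $i,j$ and each diagonal block $A_{i,i}$ is symmetric; moreover $\nbd_{C_j}(v_{i,\alpha})$ is exactly the support of the $\alpha$-th row of $A_{i,j}$ (the set of column indices carrying a $1$), and consequently $\nbd_{C_i}(v_{j,\beta})$ is the support of the $\beta$-th row of $A_{j,i}=A_{i,j}^t$, i.e. the support of the $\beta$-th column of $A_{i,j}$.

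First I would record the elementary fact that for $m\times m$ $(0,1)$-matrices $P,Q$ the entry $(PQ)_{\alpha,\beta}=\sum_\gamma P_{\alpha,\gamma}Q_{\gamma,\beta}$ counts the indices $\gamma$ lying simultaneously in the support of the $\alpha$-th row of $P$ and in the support of the $\beta$-th column of $Q$. Applying this with $P=A_{i_1,j_1}$, $Q=A_{i_2,j_2}$ and using the dictionary above, the left-hand side of the commuting condition is precisely $(A_{i_1,j_1}A_{i_2,j_2})_{\alpha,\beta}$. Rewriting the right-hand side with the roles of rows and columns interchanged --- this is where $A_{j,i}=A_{i,j}^t$ is used, so that the indices of $\nbd_{C_{i_1}}(v_{j_1,\beta})$ attach to the $\beta$-th \emph{column} of $A_{i_1,j_1}$ --- it equals $\sum_\gamma (A_{i_2,j_2})_{\alpha,\gamma}(A_{i_1,j_1})_{\gamma,\beta}=(A_{i_2,j_2}A_{i_1,j_1})_{\alpha,\beta}$. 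Since $\alpha,\beta$ and the four cluster indices are arbitrary (the case $i_1=j_1$ included, where $A_{i_1,i_1}$ symmetric keeps the bookkeeping identical), the commuting condition is equivalent to $A_{i_1,j_1}A_{i_2,j_2}=A_{i_2,j_2}A_{i_1,j_1}$ for all blocks, that is, $\{A_{i,j}\}$ is a commuting family.

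For the normality condition, the number of common neighbours in $C_j$ of $v_{i,\alpha}$ and $v_{i,\beta}$ equals $|\nbd_{C_j}(v_{i,\alpha})\cap\nbd_{C_j}(v_{i,\beta})|=\sum_\gamma (A_{i,j})_{\alpha,\gamma}(A_{i,j})_{\beta,\gamma}=(A_{i,j}A_{i,j}^t)_{\alpha,\beta}$, and likewise the number of common neighbours in $C_i$ of $v_{j,\alpha}$ and $v_{j,\beta}$ equals $(A_{i,j}^tA_{i,j})_{\alpha,\beta}$ (again using $A_{j,i}=A_{i,j}^t$). Hence the normality condition is exactly $A_{i,j}A_{i,j}^t=A_{i,j}^tA_{i,j}$ for $i\neq j$, i.e. every off-diagonal block is normal, while every diagonal block is symmetric and therefore normal automatically. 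Combining the two translations, the hypotheses force $\{A_{i,j}:1\le i,j\le n\}$ to be a commuting family of real normal matrices, which is the assertion; together with Theorem~\ref{Thm:solution} this also yields that $G$ and $G^\tau$ are cospectral.

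I do not anticipate a genuine obstacle here: the argument is a careful bookkeeping exercise matching incidence counts to matrix entries. The only delicate point is keeping track of which neighbourhood set corresponds to a row and which to a column of a block --- precisely where undirectedness ($A_{j,i}=A_{i,j}^t$) enters --- together with the minor check that the commuting-condition translation is unchanged when one or both of the chosen subgraphs $\langle C_i,C_j\rangle$ is a diagonal block $\langle C_i,C_i\rangle$.
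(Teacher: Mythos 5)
Your proof is correct. The dictionary you set up is the right one: $(A_{i,j})_{\alpha,\beta}=1$ exactly when $v_{i,\alpha}$ and $v_{j,\beta}$ are adjacent, so $\nbd_{C_{j}}(v_{i,\alpha})$ is the row support of $A_{i,j}$ and, via $A_{j,i}=A_{i,j}^{t}$, $\nbd_{C_{i}}(v_{j,\beta})$ is the column support; the intersection counts then become $(A_{i_1,j_1}A_{i_2,j_2})_{\alpha,\beta}$ and $(A_{i_2,j_2}A_{i_1,j_1})_{\alpha,\beta}$ for the commuting condition, and $(A_{i,j}A_{i,j}^{t})_{\alpha,\beta}$ versus $(A_{i,j}^{t}A_{i,j})_{\alpha,\beta}$ for the normality condition, with symmetric diagonal blocks handled automatically. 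The comparison with the paper is therefore not about a different method but about the presence of a proof at all: the paper gives no argument for this statement, merely quoting it as a summary of Theorems 1 and 2 of \cite{dutta2017quantum}, whereas your translation is a short, self-contained verification (and in fact establishes the equivalence of the combinatorial and matrix conditions, slightly more than the one-directional claim in the statement). Two small remarks: your observation that the normality condition is the special case $(i_2,j_2)=(j_1,i_1)$ of the commuting condition could be made explicit, since the index ranges in the statement already allow it; and the closing sentence about cospectrality of $G$ and $G^{\tau}$ belongs to Theorem \ref{Thm:suff} via Theorem \ref{Thm:solution}, not to the statement being proved, so it should be flagged as a corollary rather than part of this proof.
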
 
			
			Thus we have the following theorem. 
			
			\begin{theorem}\label{Thm:suff}
				Let $G$ be a clustered graph which satisfies the commuting and normality conditions mentioned in Theorem \ref{Thm:CommNor}. Then $G$ and $G^\tau$ are cospectral.
			\end{theorem}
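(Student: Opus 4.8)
The plan is to reduce the statement to a similarity between $\mathbf{A}(G)$ and $\mathbf{A}(G^\tau)$ assembled block-wise from a single conjugating matrix. First I would invoke Theorem \ref{Thm:CommNor}: the commuting and normality hypotheses on $G$ guarantee that the blocks $\{A_{i,j} : 1 \le i, j \le n\}$ appearing in the block form \eqref{bmatrix} of $\mathbf{A}(G)$ constitute a commuting family of normal real matrices of order $m$. Here the diagonal blocks $A_{i,i}$ are symmetric, hence automatically normal, and the off-diagonal blocks are handled by the normality condition, while the commuting condition supplies pairwise commutativity among all of them.

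Next, apply Theorem \ref{Thm:solution} to this family. It furnishes a single nonsingular matrix $X$ of order $m$ with $A_{i,j}^t = X^{-1} A_{i,j} X$ simultaneously for every pair $(i,j)$. Form the block-diagonal matrix $\mathcal{X} = I_n \otimes X = \diag(X, X, \dots, X)$ of order $mn$, which is nonsingular with $\mathcal{X}^{-1} = I_n \otimes X^{-1}$. A direct block computation then gives
$$\mathcal{X}^{-1}\mathbf{A}(G)\mathcal{X} = \left[ X^{-1} A_{i,j} X \right]_{mn\times mn} = \left[ A_{i,j}^t \right]_{mn \times mn} = \mathbf{A}(G^\tau),$$
using the description of $\mathbf{A}(G^\tau)$ recorded after Definition \ref{def:1}. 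Hence $\mathbf{A}(G)$ and $\mathbf{A}(G^\tau)$ are similar, so they share the same characteristic polynomial and therefore the same spectrum; that is, $G$ and $G^\tau$ are cospectral.

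The only real content beyond bookkeeping is the existence of the common conjugator $X$, which is precisely what Theorem \ref{Thm:solution} delivers from the commuting-normal structure certified by Theorem \ref{Thm:CommNor}. The main point to be careful about is that both theorems must be applied to the \emph{entire} collection of blocks, including the diagonal ones, so that the single $\mathcal{X}$ conjugates $\mathbf{A}(G)$ to $\mathbf{A}(G^\tau)$ block by block; if one only controlled the off-diagonal blocks the argument would break down.

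I would also remark that, since the underlying Lyapunov/Kronecker linear systems \eqref{Kroneqn} are real, $X$ may in fact be taken to be a real matrix, and that $\mathbf{A}(G^\tau) = [A_{i,j}^t]$ is itself real symmetric (because $\mathbf{A}(G)$ is), so the conclusion is genuinely an equality of real spectra with multiplicity. This refinement is not needed for the cospectrality claim, however, as similar matrices already have identical characteristic polynomials.
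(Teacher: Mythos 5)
Your proposal is correct and follows essentially the same route as the paper: invoke Theorem \ref{Thm:CommNor} to certify the blocks form a commuting family of normal matrices, use Theorem \ref{Thm:solution} to obtain a single conjugator $X$ with $A_{i,j}^t = X^{-1}A_{i,j}X$ for all $i,j$, and conjugate $\mathbf{A}(G)$ by $\diag(X,\dots,X)$ to land on $\mathbf{A}(G^\tau)$. Your extra remarks on realness of $X$ and on applying the conjugation to the diagonal blocks as well are harmless refinements of the paper's argument, not a different method.
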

			
			\begin{proof}
				When the subgraphs $\langle C_\mu \rangle$, and $ \langle C_\mu, C_{\nu} \rangle$ satisfy the Theorem \ref{Thm:CommNor}, there is an invertible matrix $P$ such that $A^t_{i, j} = P^{-1}A_{i, j}P$, for all $i$ and $j$ as proved in Theorem \ref{Thm:solution} . Hence we have
				\begin{align*}
					\textbf{A}(G^\tau)  & = 
					\begin{bmatrix}
						A_{1,1}^t & A_{1,2}^t & \dots & A_{1,n}^t \\
						A_{2,1}^t & A_{2,2}^t & \dots & A_{2,n}^t \\
						\vdots & \vdots & \vdots & \vdots\\
						A_{n,1}^t & A_{n,2}^t &\dots &A_{n,n}^t
					\end{bmatrix}
					= \begin{bmatrix}
						P^{-1}A_{1,1}P & P^{-1}A_{1,2}P & \dots & P^{-1}A_{1,n}P \\
						P^{-1}A_{2,1}P & P^{-1}A_{2,2}P & \dots & P^{-1}A_{2,n}P \\
						\vdots & \vdots & \vdots & \vdots\\
						P^{-1}A_{n,1}P & P^{-1}A_{n,2}P &\dots & P^{-1}A_{n,n}P
					\end{bmatrix}\\
					& = \begin{bmatrix} P^{-1} & 0 & \dots & 0 \\ 0 & P^{-1} & \dots & 0 \\ \vdots & \vdots& & \vdots \\ 0 & 0 & \dots & P^{-1} \end{bmatrix} 
					\begin{bmatrix}
						A_{1,1} & A_{1,2} & \dots & A_{1,n} \\
						A_{2,1} & A_{2,2} & \dots & A_{2,n} \\
						\vdots & \vdots & \vdots & \vdots\\
						A_{n,1} & A_{n,2} &\dots &A_{n,n}
					\end{bmatrix}
					\begin{bmatrix} P & 0 & \dots &0 \\ 0 & P & \dots & 0 \\ \vdots & \vdots& & \vdots \\ 0 & 0 & \dots & P \end{bmatrix}\\
					& = \mathcal{P}^{-1}\textbf{A}(G)\mathcal{P}\\
					\text{where,}~ \mathcal{P} & =  \diag\{P, P, \dots P \}.
				\end{align*}
				This completes the proof.	
			\end{proof}
			
			Indeed we emphasize that the condition in Theorem \ref{Thm:suff} is sufficient but not necessary for cospectrality of graphs. This is evident from the following example. 
			
			\begin{example}\label{graphex}
				Consider the following pair of GTPT cospectral graphs:
				$$G = \xymatrix{\bullet_{1,1} \ar@{-}[d] \ar@{-}[dr] & \bullet_{1,2} \\ \bullet_{2,1} \ar@{-}[d] & \bullet_{2,2} \ar@{-}[d] \\ \bullet_{3,1} & \bullet_{3,2}} \hspace{2cm} G^\tau = \xymatrix{\bullet_{1,1} \ar@{-}[d]  & \bullet_{1,2} \ar@{-}[dl] \\ \bullet_{2,1} \ar@{-}[d] & \bullet_{2,2} \ar@{-}[d] \\ \bullet_{3,1} & \bullet_{3,2}}$$
				Then
				$$\textbf{A}(G) = \begin{bmatrix} A_{1,1} & A_{1,2} & A_{1,3} \\ A_{2,1} & A_{2,2} & A_{2,3} \\ A_{3,1} & A_{3,2} & A_{3,3} \end{bmatrix} = \begin{bmatrix} \begin{bmatrix} 0 & 0\\ 0 & 0 \end{bmatrix} & \begin{bmatrix} 1 & 1 \\ 0 & 0\end{bmatrix} & \begin{bmatrix} 0&0\\ 0&0 \end{bmatrix} \\ \begin{bmatrix} 1&0\\ 1&0 \end{bmatrix} & \begin{bmatrix} 0&0\\ 0&0 \end{bmatrix} & \begin{bmatrix} 1&0\\ 0&1 \end{bmatrix}\\ \begin{bmatrix} 0&0\\ 0&0 \end{bmatrix} & \begin{bmatrix} 1&0\\ 0&1 \end{bmatrix} & \begin{bmatrix} 0&0\\ 0&0 \end{bmatrix}\end{bmatrix}.$$
				Recall that any $2\times 2$ matrix is unitarily similar to its transpose (see Lemma 2.4 and Lemma 3.3 in \cite{garcia2009unitary}). Let there be a matrix $P = \begin{bmatrix} a & b \\ c & d \end{bmatrix}$ such that
				$$ A_{1, 2}^t = P^{-1} A_{1, 2} P \Rightarrow \begin{bmatrix} 1 & 0 \\ 1 & 0 \end{bmatrix} = \begin{bmatrix} a & b \\ c & d \end{bmatrix}^{-1} \begin{bmatrix} 1 & 1 \\ 0 & 0 \end{bmatrix} \begin{bmatrix} a & b \\ c & d \end{bmatrix},$$
				$$ A_{2, 1}^t = P^{-1} A_{2, 1} P \Rightarrow \begin{bmatrix} 1 & 1 \\ 0 & 0 \end{bmatrix} = \begin{bmatrix} a & b \\ c & d \end{bmatrix}^{-1} \begin{bmatrix} 1 & 0 \\ 1 & 0 \end{bmatrix} \begin{bmatrix} a & b \\ c & d \end{bmatrix}.$$
				Comparing entrywise we obtain $a = \frac{i}{\sqrt{2}}, b = \frac{i}{\sqrt{2}}, c = \frac{i}{\sqrt{2}}, d = \frac{-i}{\sqrt{2}}$. Thus $P = \frac{i}{\sqrt{2}}\begin{bmatrix} 1 & 1 \\ 1 & -1 \end{bmatrix}$. Hence, $\textbf{A}(G^\tau)=\mathcal{P}^*\textbf{A}(G)\mathcal{P}$, 
				where $\mathcal{P}=\diag\{P, P, P\}$. Consequently  $\textbf{A}(G)$, and $\textbf{A}(G^\tau)$ are co-spectral. Indeed, note that $A_{1,2}A_{2,1}\neq A_{2,1}A_{1,2}.$
			\end{example}
			
			A critical observation from the above example leads to a procedure of constructing cospectral graphs that violate the sufficient condition of Theorem \ref{Thm:suff} is given in the following theorem.
			
			\begin{theorem}\label{nnornal}
				Let $A$ be a non-normal binary matrix of order $m$ and $A$ is similar to its transpose. Consider a clusterd graph $G$ on $mn$ vertices with $n$ clusters $\{C_1,\dots, C_n\}$ each of which contains $m$ vetices. Assume that $G$ has the following structural properties. 
				\begin{enumerate}
					\item
					The induced sub-graph of $G$ defined by $C_i, 1\leq i \leq n$ is a graph with no edges.
					\item
					The adjacency matrix corresponding to the bipartite graphs $\langle C_i \cup C_j\rangle$ is of the form 
					$$\begin{bmatrix} 0_{m\times m} & A_{i,j}\\ A_{i,j}^t &0_{m\times m}\end{bmatrix}$$ 
					and $A_{i,j}\in\{A, I_m, 0_{m\times m}\}, i\neq j$ where $I_m$ is the identity matrix of order $m,$ $0_{m\times m}$ is the zero matrix of order $m,$ and at least for one pair of $(i,j), A_{i,j}=A.$
				\end{enumerate}
				Then the graphs $G$ and $G^\tau$ are co-spectral but the commuting normality property of blocks in $\textbf{A}$ is not satisfied.
			\end{theorem}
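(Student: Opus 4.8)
The plan is to prove that $\textbf{A}(G)$ and $\textbf{A}(G^\tau)$ are similar, whence they are cospectral. Following Example \ref{graphex}, I would conjugate by a block‑diagonal matrix $\mathcal{P}=\diag\{P,\dots,P\}$ built from $n$ copies of a single invertible $m\times m$ matrix $P$: then $\mathcal{P}^{-1}\textbf{A}(G)\mathcal{P}$ has $(i,j)$ block $P^{-1}A_{i,j}P$, while $\textbf{A}(G^\tau)$ has $(i,j)$ block $A_{i,j}^t$. By hypothesis the diagonal blocks $A_{i,i}$ are $0_{m\times m}$ and each off‑diagonal block is $A$, $I_m$ or $0_{m\times m}$; moreover, since $\textbf{A}(G)$ is symmetric, whenever a block equals $A$ the block in the transposed position equals $A^t$, so in fact every block lies in $\{A,\,A^t,\,I_m,\,0_{m\times m}\}$. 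As $P^{-1}I_mP=I_m=I_m^t$ and $P^{-1}0_{m\times m}P=0_{m\times m}=0_{m\times m}^t$ hold automatically, the whole construction reduces to producing one invertible $P$ with $P^{-1}AP=A^t$ \emph{and} $P^{-1}A^tP=A$.

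This is where the real work lies. A priori we only know $A$ is similar to $A^t$, and an arbitrary invertible $Q$ with $A^t=Q^{-1}AQ$ need not satisfy $Q^{-1}A^tQ=A$ --- that extra identity is equivalent to $Q^2$ commuting with $A$, equivalently to $Q$ commuting with $A+A^t$ and anticommuting with $A-A^t$. So the decisive step is to show that the similarity between $A$ and $A^t$ can be realized by a matrix $P$ whose square is central (for instance an involution, or a $P$ with $P^2=cI_m$); note that in Example \ref{graphex} the exhibited $P=\tfrac{i}{\sqrt2}\left[\begin{smallmatrix}1&1\\1&-1\end{smallmatrix}\right]$ indeed has $P^2=-I_m$, and for $m=2$ one may start from a unitary conjugating $A$ to $A^t$ as in \cite{garcia2009unitary} and check that $P^2=\pm I_2$. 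Once a $P$ with $P^{-1}AP=A^t$ and $P^2=cI_m$ is secured, $P^{-1}A^tP=P^{-1}(P^{-1}AP)P=P^{-2}AP^2=A$, so $P^{-1}A_{i,j}P=A_{i,j}^t$ holds for \emph{every} block. I expect this existence statement --- upgrading "$A$ is similar to $A^t$'' to a similarity with central square --- to be the main obstacle.

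Granting that step, the argument closes quickly: with $\mathcal{P}=\diag\{P,\dots,P\}$ we get $\textbf{A}(G^\tau)=\mathcal{P}^{-1}\textbf{A}(G)\mathcal{P}$, so $\textbf{A}(G)$ and $\textbf{A}(G^\tau)$ share the same characteristic polynomial and $G,G^\tau$ are cospectral. For the remaining assertion: by hypothesis at least one block of $\textbf{A}(G)$ equals $A$, which is non‑normal, so the blocks of $\textbf{A}(G)$ are not all normal; and since $A$ and $A^t$ both occur as blocks with $AA^t\neq A^tA$ (again because $A$ is non‑normal), the blocks do not commute either. Hence the commuting‑normality property of the blocks fails, so the sufficient condition of Theorem \ref{Thm:suff} does not apply, even though $G$ and $G^\tau$ are cospectral --- which is exactly what the theorem claims.
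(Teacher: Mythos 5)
You follow the same route as the paper: conjugate $\textbf{A}(G)$ by $\mathcal{P}=\diag\{P,\dots,P\}$, exactly as in Theorem \ref{Thm:suff} and Example \ref{graphex}, and your reduction is the right one --- since $\textbf{A}(G)$ is symmetric, both $A$ and $A^t$ occur as blocks, so a single $P$ must satisfy $P^{-1}AP=A^t$ \emph{and} $P^{-1}A^tP=A$. Your diagnosis that this second identity is the crux is accurate; the paper's own proof (``follows from the fact that $A$ is similar to its transpose'') silently assumes it. But your proposal stops exactly at that point: you say you ``expect'' the existence of a $P$ with central square to be the main obstacle and verify it only for $m=2$. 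That missing step is a genuine gap, not a formality. ``$A$ is similar to $A^t$'' holds for \emph{every} square matrix, so the stated hypothesis contributes nothing toward the swap property. What is needed is that the pair $(A,A^t)$ be simultaneously similar to $(A^t,A)$, i.e.\ an invertible $P$ commuting with $A+A^t$ and conjugating $A-A^t$ to its negative, and such a $P$ need not exist: if $A+A^t$ has simple spectrum, any $P$ commuting with it is diagonal in an orthonormal eigenbasis of $A+A^t$, and $P(A-A^t)P^{-1}=-(A-A^t)$ then forces $p_i p_j^{-1}=-1$ for every nonzero entry of the skew part in that basis, which is impossible as soon as that support contains an odd cycle (e.g.\ a triangle). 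So for $m\geq 3$ the swapping $P$ can fail to exist even though $A\sim A^t$ always holds; non-existence of $P$ does not by itself disprove cospectrality of $G$ and $G^\tau$, but it does mean your argument (and the paper's) does not go through from the stated hypothesis.

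The clean way to salvage the scheme, and what Example \ref{graphex} implicitly uses, is \emph{unitary} similarity: if $A$ is real and $U^{*}AU=A^t$ with $U$ unitary, then taking conjugate transposes gives $U^{*}A^tU=A$ for free, so $\mathcal{U}=\diag\{U,\dots,U\}$ conjugates $\textbf{A}(G)$ to $\textbf{A}(G^\tau)$. This is precisely the condition studied in \cite{garcia2009unitary}; it is automatic for $m\leq 2$ (consistent with your $2\times 2$ check and with $P^2=-I$ in Example \ref{graphex}) but is not guaranteed for $m\geq 3$. To have a complete proof you must either strengthen the hypothesis (assume $A$ unitarily similar to $A^t$, or directly assume an invertible $P$ with $P^{-1}AP=A^t$ and $P^{-1}A^tP=A$) or supply an existence argument that the present hypotheses simply do not provide. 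Your closing paragraph on the failure of the commuting-normality condition (a non-normal block already violates it, and $AA^t\neq A^tA$) is correct and unproblematic.
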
 
			
			\begin{proof}
				The proof follows from the fact that the matrix $A$ is similar to its transpose. 
			\end{proof}
			
			Here we mention that construction of a non-normal binary matrix is easy. Indeed $r_i=c_i$ where $r_i$ is the sum of the entries of $i$th row and $c_i$ is the sum of entries of $i$th colum of the matrix is a necessary conditon for a binary matrix to be normal. Thus any binary matrix violating this condition is an example of a non-normal matrix and the condition for similarity to its transpose can be found in \cite{garcia2009unitary}. In this context we mention that if the blocks of the adjacency matrix $\textbf{A}(G)$ is normal then the degree sequence of $G$ shall be same as the degree sequence of $G^\tau.$ This indicates a possibility that $G^\tau$ could be isomorphic to $G.$ Whereas, for a non-normal off-diagonal block matrix of $\textbf{A}(G)$ the degree sequence of $G$ and $G^\tau$ need not be equal. Hence Theorem \ref{nnornal} can provide a useful tool for generation of non-isomorphic cospectral graphs.
			
			Obviously the Theorem \ref{nnornal} can be generalized in many ways. For instance, if the matrices $A_{i,j}\in \{A_l, I_m, 0_{m\times m}, l=1,\dots,k\}$ such that there exists a non-singular matrix $X$ for which $X^{-1}A_lX=A_l^t$ and $A_l, l=1,\dots,k$ form a set of non-normal and/or non-commutating matrices then the graph $G^\tau$ is cospectral but need not be isomorphic to $G.$

	\section{Construction of cospectral graphs from GTPT equivalent cospectral graphs}\label{Sec:3}

		In the following we consider a clustered graph $G$ on $mn$ vertices  with clusters $C_i=\{v_{i,1}, v_{i,2}, \dots, v_{i,m}\}$, $i=1,\dots, n$ such that $G$ and $G^\tau$ are cospectral. Denote the adjacency block matrix associated with the graph $G$ as $\textbf{A}(G)=[A_{ij}], 1\leq i,j\leq n.$ We develop procedures for construction of new cospectral graphs from the GTPT equivalent cospectral graphs $G$ and $G^\tau$ as follows.
		
		\begin{procedure}\label{pros1}
			Let $G$ be a clustered graph which inherits the structural properties possessed by the commuting and normality conditions mentioned in Theorem \ref{Thm:CommNor}. Construct a clustered graph $H=(V(H), E(H))$ on $km$ vertices with clusters $D_j, j=1,\dots,k$ defined as follows. 
			\begin{itemize} 
				\item 
				$V(H)=\bigsqcup_{p=1}^k D_p$ where $D_p\in\{C_1, C_2,\dots, C_n\}$ where $k\geq 2.$  
				\item 
				Generate edges between vertices of $D_p$ and $D_q, q\neq p$ such that the adjacency matrix corresponding to the bipartite graph $\langle D_p,D_q\rangle, 1\leq p,q\leq k$ is given by 
				$$\begin{bmatrix} 0_{m\times m}&D_{p,q}\\ D_{p,q}^t & 0_{m\times m}\end{bmatrix}$$ 
				such that $D_{p,q}\in \{I_{m}, 0_{m\times m}\} \cup \{ A_{i,j} | i\neq j, 1\leq i,j\leq n\}.$ Thus either no vertex of $D_p$ is adjacent to any vertex of $D_q,$ or the $l$th vertex $v_{p,l}$ of $D_p$ is adjacent to $l$th vertex $v_{q,l}$ of $D_q$ for all $1\leq l\leq m$ only, or $\langle D_p,D_q\rangle=\langle C_i,C_j\rangle$ for some $i$ and $j\neq i.$  
			\end{itemize}
		\end{procedure}
		
		\begin{theorem}
			The clustured graph $H$ constructed from a clustered graph $G$ using Procedure-1 is a cospectral mate of the graph $H^\tau.$
		\end{theorem}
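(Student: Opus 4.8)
The plan is to repeat, essentially verbatim, the argument used for Theorem~\ref{Thm:suff}: I would produce a single nonsingular matrix $P$ of order $m$ that simultaneously conjugates \emph{every} block of $\textbf{A}(H)$ to its transpose, and then lift it to a block-diagonal matrix $\mathcal{P}$ realizing a similarity between $\textbf{A}(H)$ and $\textbf{A}(H^\tau)$. Cospectrality then follows because similar matrices have the same spectrum.

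First I would invoke the hypotheses: since $G$ satisfies the commuting and normality conditions of Theorem~\ref{Thm:CommNor}, the blocks $\{A_{i,j}\,:\,1\le i,j\le n\}$ of $\textbf{A}(G)$ form a commuting family of normal real matrices, so by Theorem~\ref{Thm:solution} there is a fixed nonsingular $P$ with $A_{i,j}^{t}=P^{-1}A_{i,j}P$ for all $i,j$ (this is exactly the matrix appearing in the proof of Theorem~\ref{Thm:suff}). Next I would check that this same $P$ handles all the blocks of $\textbf{A}(H)$. By construction each diagonal block $D_{p,p}$ is the adjacency matrix of a cluster $D_p\in\{C_1,\dots,C_n\}$, hence $D_{p,p}=A_{i,i}$ for some $i$, so $D_{p,p}^{t}=P^{-1}D_{p,p}P$ already holds; for $p\neq q$ the block $D_{p,q}$ lies in $\{I_m,0_{m\times m}\}\cup\{A_{i,j}\,:\,i\neq j\}$, and we have $I_m^{t}=I_m=P^{-1}I_mP$ and $0_{m\times m}^{t}=0_{m\times m}=P^{-1}0_{m\times m}P$ trivially, while the relation for each $A_{i,j}$ is already established. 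Thus $D_{p,q}^{t}=P^{-1}D_{p,q}P$ for all $1\le p,q\le k$.

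Finally, setting $\mathcal{P}=\diag\{P,P,\dots,P\}$ with $k$ diagonal blocks, the block computation in the proof of Theorem~\ref{Thm:suff} yields
$$\textbf{A}(H^\tau)=\bigl[D_{p,q}^{t}\bigr]_{1\le p,q\le k}=\bigl[P^{-1}D_{p,q}P\bigr]_{1\le p,q\le k}=\mathcal{P}^{-1}\,\textbf{A}(H)\,\mathcal{P},$$
so $\textbf{A}(H)$ and $\textbf{A}(H^\tau)$ are similar and hence cospectral, which is the claim.

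I do not expect a genuine obstacle here; the argument is entirely inherited from Theorems~\ref{Thm:solution} and~\ref{Thm:suff}. The only point that needs care is bookkeeping: one must observe that enlarging the collection of admissible off-diagonal blocks by $I_m$ and $0_{m\times m}$ does not spoil the existence of a common conjugator, since both are normal, commute with everything, and are fixed under conjugation by $P$; and one should note that whichever clusters are reused (possibly with repetition) as the $D_p$ contribute, as diagonal blocks, only matrices of the form $A_{i,i}$, which are already among those conjugated to their transposes by $P$.
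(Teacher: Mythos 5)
Your proposal is correct and follows exactly the paper's intended argument: the paper's own proof is the one-line remark that the blocks of $\textbf{A}(H)$ form a family of matrices similar to their transposes via a common similarity matrix, and you simply make this explicit by taking the matrix $P$ from Theorem \ref{Thm:solution} (as in Theorem \ref{Thm:suff}), observing that every block of $\textbf{A}(H)$ is an $A_{i,j}$, $I_m$, or $0_{m\times m}$, and conjugating by $\mathcal{P}=\diag\{P,\dots,P\}$. No gaps; your bookkeeping about the diagonal blocks and the trivially handled blocks $I_m$, $0_{m\times m}$ is exactly what the paper leaves implicit.
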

		\begin{proof}
			The proof follows from the fact that the blocks of adjacency matrix associated with the graph $H$ form a family of similar matrices with a common similarity matrix.
		\end{proof}
		
		Observe that the crux behind the construction of the cospectral graphs $H$ and $H^\tau$ in Procedure-1 is by adding/removing some copies of the existing clusters in $G$ and adding/removing copies of some bipartite graphs between the existing clusters. We describe the next procedure for construction of cospectral graphs by adding new vertices in the existing clusters of $G.$ 
		
		\begin{procedure}\label{pros2} 
			Let $G$ be a clustered graph on $mn$ vertices that inherits the structural properties possessed by the commuting and normality conditions mentioned in Theorem \ref{Thm:CommNor}. Construct a clustered graph $H=(V(H), E(H))$ on $(2m-1)n$ vertices with clusters $D_i, i=1,\dots,n$ defined as follows. 
			\begin{itemize} 
				\item 
				$V(H)=\bigsqcup_{i=1}^n D_i$ where $D_i=C_i \cup \, \widehat{C}_i$ where $\widehat{C}_i=\{v_{i,m}, v_{i,m+1}, \dots, v_{i,2m-1}\}.$ Note that $v_{i,j}, j=m+1,\dots,2m-1, i=1,\dots, n$ are new vertices and $C_i \cap \, \widehat{C}_i=\{v_{i,m}\}.$  
				\item 
				Create an edge $(v_{i,l_2}, v_{j,l_2}), 1\leq l_1,l_2\leq m$ in $H$ if and only if the vertices $v_{i,l_2}, v_{j,l_2}$ are adjacent in $G,$ $1\leq i,j\leq n.$ \item Consider the map $f: \bigsqcup_{i=1}^n C_i \rightarrow \bigsqcup_{i=1}^n \widehat{C}_i$ defined by $f(v_{i,l})=v_{i,2m-l}, l=1, \dots, m, i=1,\dots,n.$ Then define an edge between $f(v_{i,l_1})$ and $f(v_{j,l_2}), 1\leq l_1,l_2\leq m$ if and only if $v_{i,l_1}$ and $v_{j,l_2}$ are adjacent in $G$ for all $i\neq j, 1\leq i,j\leq n.$ 
			\end{itemize}
		\end{procedure}
		
		Some immediate observations about the graph $H$ constructed by using Procedure 2 are as follows. Note that, the vertex set of $H$ can be written as $V(H)=\bigsqcup_{i=1}^n C_i\cup \widehat{C}_i$ and the induced subgraph generated by the vertex set $\sqcup_{i=1}^n C_i$ in $H$ is $G.$  
		
		\begin{theorem}
			The clustured graph $H$ constructed from a clustered graph $G$ using Procedure-2 is a cospectral mate of the graph $H^\tau.$
		\end{theorem}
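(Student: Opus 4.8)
The plan is to show that the adjacency matrix $\textbf{A}(H)$ of the graph $H$ produced by Procedure-2 is similar, via a block-diagonal matrix whose diagonal blocks are a single fixed invertible matrix $Q$ of order $2m-1$, to $\textbf{A}(H^\tau)=\textbf{A}(H)^\tau$; cospectrality then follows immediately since similar matrices share their spectra. This mirrors the argument in the proof of Theorem~\ref{Thm:suff}: it suffices to exhibit one invertible $Q$ of order $2m-1$ with $B_{i,j}^t=Q^{-1}B_{i,j}Q$ simultaneously for every off-diagonal block $B_{i,j}$ of $\textbf{A}(H)$, and $B_{i,i}^t=Q^{-1}B_{i,i}Q$ for the diagonal blocks. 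So the real work is to identify the blocks $B_{i,j}$ explicitly from the construction and to build $Q$ from the similarity matrix $P$ guaranteed by Theorem~\ref{Thm:solution} for the blocks $A_{i,j}$ of $\textbf{A}(G)$.

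First I would write down the blocks of $\textbf{A}(H)$ coming from the three bullet points of Procedure-2. Using the ordering of $D_i$ as $C_i$ followed by the new vertices of $\widehat{C}_i$, the second bullet puts a copy of $A_{i,j}$ in the ``$C_i$ versus $C_j$'' corner, the third bullet (via the reversal map $f(v_{i,l})=v_{i,2m-l}$) puts a copy of $A_{i,j}$, conjugated by the reversal permutation $R$ (the $m\times m$ anti-identity), in the ``$\widehat{C}_i$ versus $\widehat{C}_j$'' corner, and there are no edges between $C_i$ and $\widehat{C}_j$. The overlap $C_i\cap\widehat{C}_i=\{v_{i,m}\}$ means the two $m\times m$ corners share exactly the $(m,m)$ row/column, so $B_{i,j}$ has order $2m-1$ and is block-structured with $A_{i,j}$ and $R A_{i,j} R$ glued along that single coordinate; a direct check shows this gluing is consistent precisely because $R e_m = e_m$. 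For the diagonal blocks, since each cluster $C_i$ has no internal edges affected and the new vertices only inherit edges within their own cluster through $f$, $B_{i,i}$ is likewise built from $A_{i,i}$ and $R A_{i,i} R$ along the shared coordinate.

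Next I would construct $Q$. Let $P$ be the common similarity matrix from Theorem~\ref{Thm:solution} satisfying $A_{i,j}^t=P^{-1}A_{i,j}P$ for all $i,j$ (here using that $G$ meets the hypotheses of Theorem~\ref{Thm:CommNor}, so the blocks $A_{i,j}$ form a commuting normal family). Then $RP R$ conjugates $R A_{i,j} R$ to its transpose in the same way, since $(R A_{i,j} R)^t = R A_{i,j}^t R = R P^{-1} A_{i,j} P R = (RPR)^{-1}(R A_{i,j} R)(RPR)$. The matrix $Q$ of order $2m-1$ is then assembled as the block matrix built from $P$ in the first $m$ coordinates and $RPR$ in the last $m$ coordinates, agreeing on the shared $(m,m)$ entry; one must check that $P$ and $RPR$ have the same $(m,m)$ entry (this is the statement $e_m^t P e_m = e_m^t RPR\, e_m = e_m^t P e_m$, which holds automatically) and, more substantively, that the off-diagonal couplings into coordinate $m$ are compatible so that a single well-defined invertible $Q$ exists. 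With $Q$ in hand, $B_{i,j}^t = Q^{-1} B_{i,j} Q$ for every $i,j$, so $\mathcal{Q}=\diag\{Q,\dots,Q\}$ gives $\textbf{A}(H^\tau)=\mathcal{Q}^{-1}\textbf{A}(H)\mathcal{Q}$, proving cospectrality.

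The main obstacle I anticipate is the bookkeeping at the shared vertex $v_{i,m}$: one must verify that the ``glued'' matrices $B_{i,j}$ are genuinely well-defined (the $C_i$-description and the $\widehat{C}_i$-description must assign the same adjacencies to $v_{i,m}$), and that $Q$ inherits invertibility from $P$ despite the overlap. A clean way to sidestep the gluing subtleties is to note that $R e_m = e_m$, so conjugation by $R$ fixes the $m$th coordinate hyperplane appropriately, and to phrase $B_{i,j}$ and $Q$ using an explicit $(2m-1)\times(2m-1)$ block pattern indexed by $\{1,\dots,m-1\}\sqcup\{m\}\sqcup\{m+1,\dots,2m-1\}$; then invertibility of $Q$ reduces to invertibility of $P$ together with invertibility of the ``reversed'' trailing block, which is just $\det(RPR)=\det P\neq0$. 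Once the block pattern is set up correctly the verification $B_{i,j}^t=Q^{-1}B_{i,j}Q$ is a routine computation using $A_{i,j}^t=P^{-1}A_{i,j}P$ and $R^2=I$.
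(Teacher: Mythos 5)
Your overall route is the same as the paper's: you identify each block $B_{i,j}$ of $\textbf{A}(H)$ as the matrix obtained by gluing $A_{i,j}$ (on the $C_i$--$C_j$ coordinates) with its reversal $RA_{i,j}R$ (on the $\widehat{C}_i$--$\widehat{C}_j$ coordinates) along the shared index $m$, you build a single $(2m-1)\times(2m-1)$ matrix $Q$ by applying the identical gluing to the common similarity matrix $P$ of Theorem \ref{Thm:solution}, and you conclude via the block-diagonal similarity $\diag\{Q,\dots,Q\}$ exactly as in Theorem \ref{Thm:suff}. Your $Q$ is precisely the paper's $P_b$ (the worked example with $P_a=\frac{i}{\sqrt2}\begin{bmatrix}1&1\\1&-1\end{bmatrix}$ and $P_b=\frac{i}{\sqrt2}\begin{bmatrix}1&1&0\\1&-1&1\\0&1&1\end{bmatrix}$ confirms this), and your description of the blocks is the consistent reading of Procedure 2, so on the intertwining relation $B_{i,j}^t=Q^{-1}B_{i,j}Q$ you are no less complete than the paper, which also leaves it as ``a simple algebraic calculation.''

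There is, however, a genuine flaw in your invertibility argument. The glued matrix $Q$ is \emph{not} block triangular, so its invertibility does not ``reduce to invertibility of $P$ together with $\det(RPR)=\det P\neq 0$.'' Gluing two invertible matrices along a shared coordinate can produce a singular matrix: for $m=2$ and $P=\begin{bmatrix}0&1\\1&0\end{bmatrix}$ the glued matrix is $\begin{bmatrix}0&1&0\\1&0&1\\0&1&0\end{bmatrix}$, whose first and third rows coincide. This is not an idle worry within the theorem's hypotheses: take $G$ to be the $6$-cycle viewed as two clusters of size $3$ with bipartite block the cyclic permutation $C$; the blocks $\{0,C,C^t\}$ form a commuting normal family, the reflection $F$ (fixing the first index) is a legitimate choice of $P$ with $PA_{i,j}^t=A_{i,j}P$ for all blocks, yet the glued $5\times 5$ matrix has two equal rows and is singular. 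So invertibility of $Q$ depends on \emph{which} solution $P$ of the intertwining equations you feed into the construction, and your proposed justification would fail for such admissible choices. To be fair, the paper itself only asserts that $P_b$ is invertible without proof; but your proposal replaces that assertion with an argument that is incorrect, whereas a correct proof must either verify nonsingularity of the glued matrix for the specific $P$ produced by Theorem \ref{Thm:solution} (or another suitably chosen $P$), or impose/establish extra structure on $P$ guaranteeing it.
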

		
		\begin{proof}
			Consider the subgraph $\langle C_i, C_j \rangle$ of $G$. Then the subgraph $\langle D_i, D_j \rangle$ of $H$ contains all the edges of $\langle C_i, C_j \rangle$ and some additional edges that preserve the structural properties of $\langle C_i, C_j \rangle$. Let the matrix $$\begin{bmatrix} 0_{2m-1\times 2m-1} & B_{i,j}\\ B_{i,j}^t &0_{2m-1\times 2m-1}\end{bmatrix}$$ in $A(H)$ represent the adjacency relations of $\langle D_i, D_j \rangle$ and the $k$-th column of $B_{ij}$ is denoted by $b_{*k}^t$ for $k = 1, 2, \dots (2m - 1)$. Also let the $k$-th column of $A_{ij}$ is denoted by $a_{*k}^t$ for $k = 1, 2, \dots m$. Now, we observe that 
			\begin{equation}
			b_{*k}^t = \begin{cases} [a_{*k} \,\underbrace{0 \, 0 \, \dots 0}_{(m - 1)\text{-times}}]^t & ~\text{for}~ k = 1, 2, \dots (m - 1) \\
			[a_{* m} \, a_{m - 1, m} \, a_{m - 2, m} \, \dots a_{1 ,m}]^t & ~\text{for}~ k = m\\
			[\underbrace{0\, 0\, \dots 0}_{m\text{-times}} a_{m - 1, 2m - k}\, a_{m - 2, 2m - k}\, \dots a_{1, 2m - k}]^t & ~\text{for}~ k = (m + 1), (m + 2), \dots (2m - 1) \end{cases}\\
			\end{equation} 
			
			Note that due to our assumption on $G,$ there is a similarity matrix $P_a=[p_{xy}]$ such that $P_a A^t_{ij} = A_{ij}P_a$ for all $i,j$. Now we show that there is a similarity matrix $P_b$ such that $P_b B^t_{ij} = B_{ij}P_b$ holds for all $i, j,$ and hence  the new graph $H$ will be cospectral to $H^\tau.$ We proceed as follows.
			
			Let the $k$-th columns of $P_a$ and $P_b$ be $p_{*k}^t$ and $\tilde{p}_{*k}^t$ respectively. Also the $k$-th row vector of $A_{ij}$ is $a_{k*}$, that becomes the $k$-th column vector of $A_{ij}^t$ after a transposition. Since $A_{ij}^t = P_a^{-1}A_{ij}P_a$, we obtain $a_{*k}^t = P_a^{-1}A_{ij}p_{*k}^t$. We define
			
			\begin{equation}
			\tilde{p}_{*k}^t  = \begin{cases} [p_{*k} \, \underbrace{0\, 0 \dots 0}_{(m - 1)\text{-times}}]^t & ~\text{for}~ k = 1, 2, \dots (m - 1) \\
			[p_{*m} \, p_{m - 1, m}\, p_{m - 2, m}\, \dots p_{1 ,m}]^t & ~\text{for}~ k = m\\
			[\underbrace{0\, 0\, \dots 0}_{ m\text{-times}} \, p_{m - 1, 2m - k}\, p_{m - 2, 2m - k}\, \dots p_{1, 2m - k}]^t & ~\text{for}~ k = (m + 1), (m + 2), \dots (2m - 1). \end{cases}\\
			\end{equation} 
			Now it is a simple algebraic calculation to verify that $P_b b_{*k}^t = B_{ij}\tilde{p}_{*k}^t$ and $P_b$ is invertible. Hence the proof. 
		\end{proof}
		
		The following example illustrates the relationship between $A_{ij}$ and $B_{ij}$ transparent. 
		\begin{example}
			Consider the graph $G$ of the example \ref{graphex}. Then the graph $H$ generated by applying Procedure 2 on $G$ is given by
			$$H = \xymatrix{\bullet_{1,1} \ar@{-}[d] \ar@{-}[dr] & \bullet_{1,2} & \bullet_{1,3} \ar@{-}[d] \ar@{-}[dl] \\ \bullet_{2,1} \ar@{-}[d] & \bullet_{2,2} \ar@{-}[d] & \bullet_{2,3} \ar@{-}[d] \\ \bullet_{3,1} & \bullet_{3,2} & \bullet_{3,3}}$$
			In the example \ref{graphex} we calculated $A_{1,2} = \begin{bmatrix} 1 & 1 \\ 0 & 0 \end{bmatrix}$ and the block similarity matrix $P_a = \frac{i}{\sqrt{2}} \begin{bmatrix}1 & 1 \\ 1 & -1 \end{bmatrix}$. In $A(H)$, observe that the block $B_{1,2} = \begin{bmatrix} 1 & 1 & 0 \\ 0 & 0 & 0 \\ 0 &1 & -1 \end{bmatrix}$ and the similarity matrix $P_b = \frac{i}{\sqrt{2}} \begin{bmatrix}1 & 1 & 0\\ 1 & -1 & 1 \\ 0 & 1 & 1\end{bmatrix}$. 
		\end{example}
		
		We emphasize that Procedures 1 and 2 can be gainfully used to generate large cospectral graphs and hence compare to GM-switching the idea of GTPT equivalent graphs proves to be more efficient for producing cospectral graphs as almost no cospectral mate of a large graph can be obtained by GM-switching \cite{haemers2004enumeration}. 
		
		Now we recall Example \ref{noncos} which ensures that for two isomorphic (cospectral) graphs $G$ and $H$ the corresponding GTPT equivalent graphs $G^\tau$ and $H^\tau$ need not be cospectral. In the following we introduce a method called alternative clustering of a clustered graph and develop a mechanism for constructing a cospectral GTPT equivalent graph from a given GTPT equivalent cospectral graph.  
		
		For the graph $G$ with clusters $C_i=\{v_{i,1}, v_{i,2}, \dots, v_{i,m}\}$, $i=1,\dots, n,$ we define an altermative clustering on $G$ as $C'_j=\{v_{1,j}, v_{2,j}, \dots, v_{n,j}\}$, $j=1,\dots, m.$ Note that $\sqcup_{i=1}^n C_i = \sqcup_{j=1}^m C'_j.$ Defining these new clusters along with the exiting edges, an isomorphic copy of $G$ is obtained that we call as the alternating clustered graph of $G$ and denote it by $G_{a}.$ Then we have the following theorem. 
		
		\begin{theorem}\label{Th-8} 
			Let $G$ be a clustered graph on $mn$ vertices with $n$ clusters. Then $G^\tau$ and $G_a^\tau$ are isomorphic. Moreover, if $G$ and $G^\tau$ are co-spectral then $G_a$ and $G_a^\tau$ are co-spectral.
		\end{theorem}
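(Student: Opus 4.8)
The plan is to unwind the two claims separately, both by exhibiting explicit vertex bijections that act as the required isomorphisms. For the first claim, the key observation is that passing from the clustering $\{C_i\}$ to the alternative clustering $\{C'_j\}$ amounts to the permutation similarity of $\textbf{A}(G)$ that swaps the roles of the two tensor factors in the $mn$-dimensional index set; concretely, if $\sigma$ is the permutation of $\{1,\dots,mn\}$ sending the index of $v_{i,j}$ (in $G$) to the index of $v_{i,j}$ viewed inside $C'_j$ (in $G_a$), then $\textbf{A}(G_a)=\Sigma\,\textbf{A}(G)\,\Sigma^{t}$ where $\Sigma$ is the permutation matrix of $\sigma$. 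I would then verify, directly from Definition \ref{def:1}, that the GTPT operation is equivariant under this swap: an edge $(v_{i,k},v_{j,l})$ with $i\neq j$, $k\neq l$ is precisely the kind of edge that is moved by the $\tau$ operation in \emph{both} clusterings, and the edge it gets sent to, $(v_{i,l},v_{j,k})$, is the same in either description once we track the index through $\sigma$. Hence $\textbf{A}(G_a^\tau)=\Sigma\,\textbf{A}(G^\tau)\,\Sigma^{t}$, which gives the isomorphism $G^\tau\cong G_a^\tau$ (indeed via a permutation similarity).

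For the second claim, suppose $G$ and $G^\tau$ are cospectral. Since $G_a$ is merely a relabelling of $G$ (same graph, different clustering), $G_a$ and $G$ are isomorphic, hence cospectral; and by the first part $G_a^\tau$ and $G^\tau$ are isomorphic, hence cospectral. Chaining these: $\operatorname{spec}(G_a)=\operatorname{spec}(G)=\operatorname{spec}(G^\tau)=\operatorname{spec}(G_a^\tau)$, so $G_a$ and $G_a^\tau$ are cospectral. This second part is essentially formal once the first part is in hand.

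The step requiring genuine care is the equivariance of $\tau$ under the cluster-swap, i.e.\ checking that the block-transpose operation on $\textbf{A}(G)$ (which transposes each $m\times m$ off-diagonal block while fixing block positions) is conjugate, via $\Sigma$, to the block-transpose operation appropriate to the $\{C'_j\}$ clustering (which works with $n\times n$ blocks). In Kronecker-product language this is the identity $\Sigma\big((P\otimes Q)^{\tau_{(1)}}\big)\Sigma^{t}=(Q\otimes P)^{\tau_{(2)}}$ relating partial transposes on the two factors, and the cleanest route is to write a general entry $\textbf{A}(G)_{(i,k),(j,l)}$, apply the relevant partial transpose, and compare with the corresponding entry of $\textbf{A}(G_a)$ after the swap $(i,k)\leftrightarrow(k,i)$. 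I expect this to reduce to the symmetric bookkeeping statement: for $i\neq j$ and $k\neq l$, the edge $(v_{i,k},v_{j,l})$ and the edge $(v_{i,l},v_{j,k})$ are interchanged by $\tau$ no matter which of the two index coordinates we declare to be the ``cluster'' coordinate, because the off-diagonal/distinct-coordinate condition is symmetric in the two coordinates. Diagonal blocks and ``same second index'' edges are untouched by $\tau$ in both pictures, so they cause no trouble. Once that symmetry is spelled out, both conclusions follow immediately.
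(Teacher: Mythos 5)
Your proposal is correct and follows essentially the same route as the paper: the paper's proof is exactly the explicit swap bijection $f(v_{i,j})=v_{j,i}$ together with an edge-by-edge check (its Cases 1 and 2 are your ``untouched'' edges and your symmetric-bookkeeping observation that $\tau$ sends $(v_{i,k},v_{j,l})$ to the same unordered pair $(v_{i,l},v_{j,k})$ under either choice of cluster coordinate), followed by the same formal chain of spectral equalities for the second claim. Your permutation-matrix/partial-transpose phrasing is just a matrix-language restatement of that same verification.
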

		\begin{proof}
			First we prove that $G^\tau$ and $G_a^\tau$ are isomorphic where the clusters of $G$ are given by $C_i=\{v_{i,1}, v_{i,2}, \dots, v_{i,m}\}$, $i=1,\dots, n,$ and the clusters of $G_a$ are described as $C'_j=\{v_{1,j}, v_{2,j}, \dots, v_{n,j}\}$, $j=1,\dots, m.$ Define a bijective map  $f: V(G^\tau) \rightarrow V(G_a^\tau)$ as $f(v_{ij}) = v_{ji}$ for $i = 1, 2, \dots n$ and $j = 1, 2, \dots m$. Now we show that $(u,v)\in E(G^\tau)$ if and only if $(f(u),f(v))\in E(G_a^\tau)$ for any two vertices $u,v\in V(G^\tau).$ We consider the following cases.
			\begin{enumerate}
				\item
				{\bf Case 1:} Let there be an edge $(v_{ij}, v_{ik}) \in E(G)$ for some $j \neq k$. Then $(v_{ji}, v_{ki}) \in E(G_a)$ after the alternative clustering. As GTPT does not change this edge, $(v_{ij}, v_{ik}) \in E(G^\tau)$ and $(v_{ji}, v_{ki}) \in E(G_a^\tau)$, that is, $(f(v_{ij}), f(v_{ik})) \in E(G_a^\tau)$. Further for any edge of the form $(v_{ji}, v_{ki}) \in E(G_a^\tau)$, $(f^{-1}(v_{ji}), f^{-1}(v_{ki})) = (v_{ij}, v_{ik}) \in E(G_a^\tau)$. The proof is similar for any edge $(v_{ij}, v_{kj}) \in E(G)$ for any $j \neq k.$  
				\item
				{\bf Case 2:} Let $(v_{ij}, v_{kl}) \in E(G)$ for some $i \neq k$ and $j \neq l$. Then after alternative clustering on $G$, $(v_{ji}, v_{lk}) \in E(G_a)$. After GTPT on $G$, and $G_a$ we have $(v_{il}, v_{kj}) \in E(G^\tau)$, and $(v_{jk}, v_{li}) \in E(G_a^\tau)$, respectively. As all the graphs are simple, $(v_{jk}, v_{li}) = (v_{li}, v_{jk}) = (f(v_{il}), f(v_{kj}))$. Further, for any edge of the form $(v_{jk}, v_{li}) \in E(G_a^\tau),$ $(f^{-1}(v_{jk}), f^{-1}(v_{li})) = (v_{il}, v_{kj}) \in E(G^\tau)$.
			\end{enumerate} 
			Combining Cases 1 and 2, we prove that $G^\tau$ and $G_a^\tau$ are isomorphic.
			
			Let $\Lambda(X)$ denote the spectrum of a graph $X.$ Then obviously $\Lambda(G)=\Lambda(G_a)$ and $\Lambda(G^\tau)=\Lambda(G_a^\tau)$ since $G$ and $G^\tau$ are isomorphic to $G_a$ and $G_a^\tau$ respectively. Moreover if $G$ and $G^\tau$ are co-spectral, that is, $\Lambda(G)=\Lambda(G^\tau)$ then obviously $\Lambda(G_a)=\Lambda(G_a^\tau).$ This completes the proof.
		\end{proof}
		
	\section{Examples of non-isomorphic cospectral GTPT equivalent graphs}\label{Sec:4}
		
		In order to investigate the efficiency of GTPT technique for constructing non-isomorphic cospectral graphs we attempt to enumerate the number of  non-isomorphic cospectral graphs that can be obtained from a few easily constructable clustered graphs with number of clusters not more than $3.$ In some cases we also employ the Procedures 1 and 2 for the same. Thus in the following we consider clustered model graphs $G$ with clusters $C_i=\{v_{i,1}, v_{i,2}, \dots v_{i,m}\}, i=1,\dots,n, n\leq 3$ and each $C_i$ contains $m\leq 4$ number of vertices. We denote the path and cycle graphs on $m$ vertices as $\mathcal{P}_m$ and $\mathcal{C}_m$ respectively. The number $\kappa$ of non-isomorphic cospectral graphs that can be obtained from each model graph is provided in the following tables. Prototype graphs are generated by using Networkx  \cite{schult2008exploring} which uses the algorithm stated in \cite{cordella2001improved} for checking graph isomorphism. Note that, in all the examples, the clusters are considered as the collection of vertices in each row or column as indicated by the numbers $m$.
		
		First we consider the following model graphs $G$ having clusters $C_i, i=1,\dots, n, n\in\{2, 3\}$ with special structures and assign arbitrary edges to form $\langle C_i, C_j \rangle, i\neq j$ such that the ultimate graph $G$ becomes cospectral and non-isomorphic to $G^\tau.$ We calculate the number of such graphs $G$ possible for each fixed model. 
		\begin{enumerate}[label = Model \arabic*.,itemindent=*]
			\item[1a.] 
			$n=2, m\in\{2,3,4\}.$ The clusters of $G$ are as follows: $C_1$ is a graph with no edges, $C_2=\mathcal{P}_m.$
			\item[1b.]
			$n=2, m\in\{2,3,4\}.$ The clusters of $G$ are as follows: $C_1$ is a graph with no edges, $C_2=\mathcal{C}_m.$
		\end{enumerate}
		
		\begin{longtable}{|p{.02\textwidth}|p{.020\textwidth}|p{.04\textwidth}|p{.80\textwidth}|}
			\hline
			\rotatebox[origin=c]{90}{Model number} & m & $\kappa$ & Examples\\
			\hline
			\hline 
			1a & 2 & 0 & No non-isomorphic GTPT cospectral graph \\
			\cline{2 - 4}
			& 3 & 4 & \\
			\rotatebox[origin=c]{90}{(n = 2)} & & & \includegraphics[scale = .123]{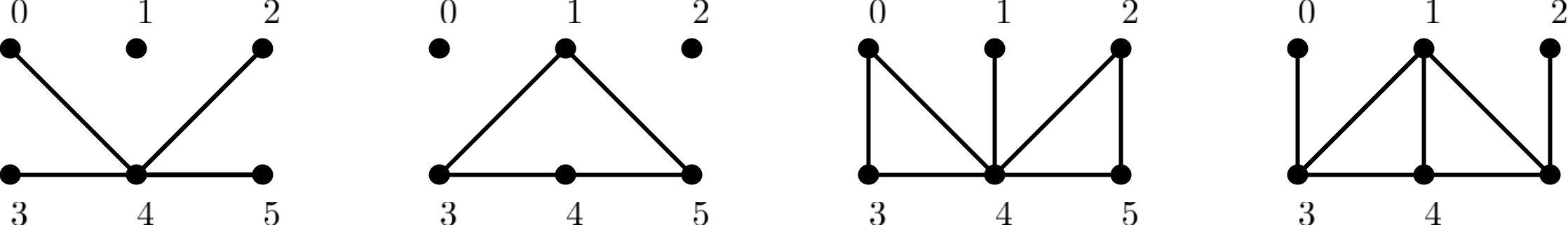}\\
			\cline{2 - 4}
			& 4 & 16 & \\
			& & & \includegraphics[scale = .09]{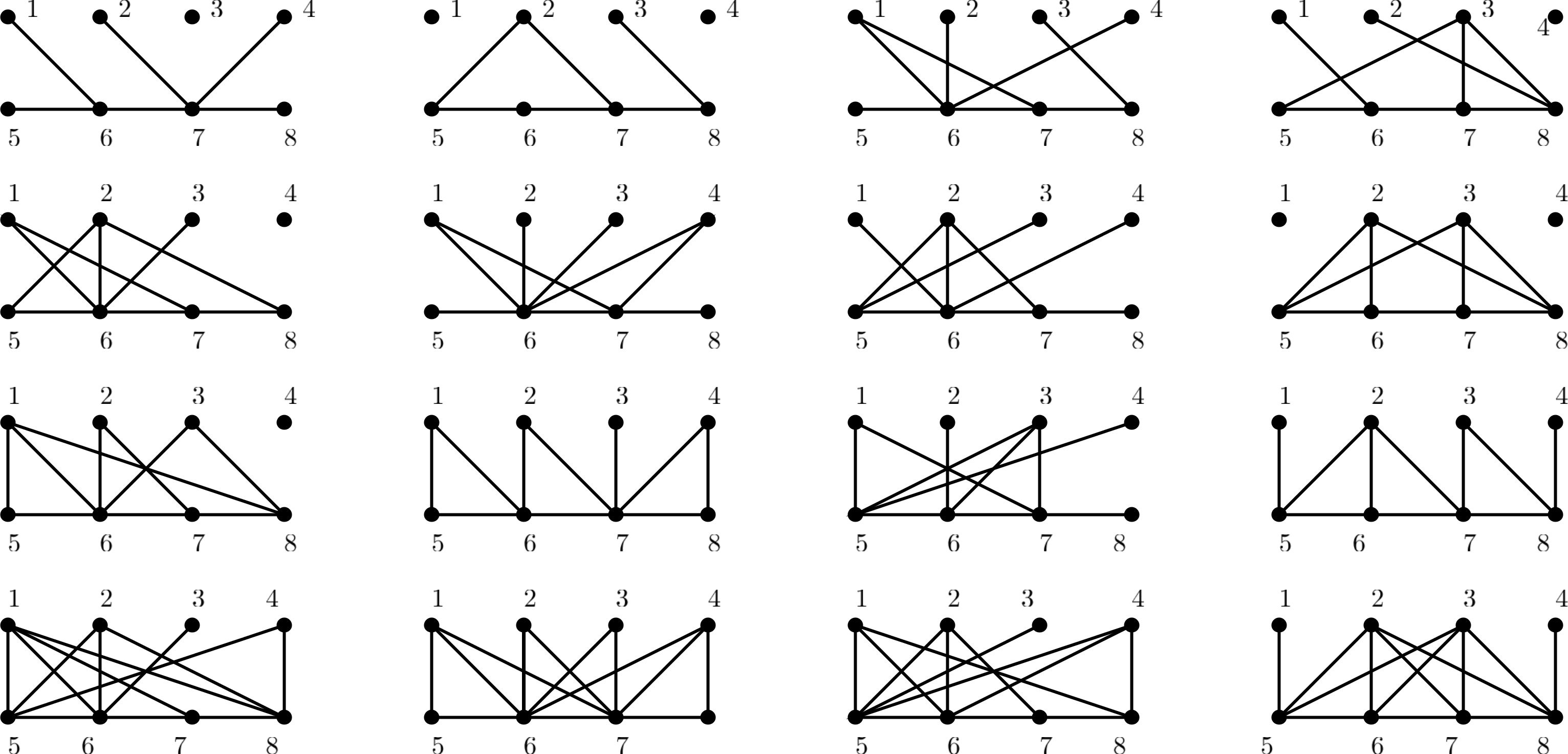}\\
			\hline
			\hline 
			1b& 2 & 0 & No non-isomorphic GTPT cospectral graph\\
			\cline{2 - 4}
			& 3 & 0 & No non-isomorphic GTPT cospectral graph\\
			\cline{2 - 4}
			& 4 & 4 & \\
			\rotatebox[origin=c]{90}{(n = 2)} & & & \includegraphics[scale = .093]{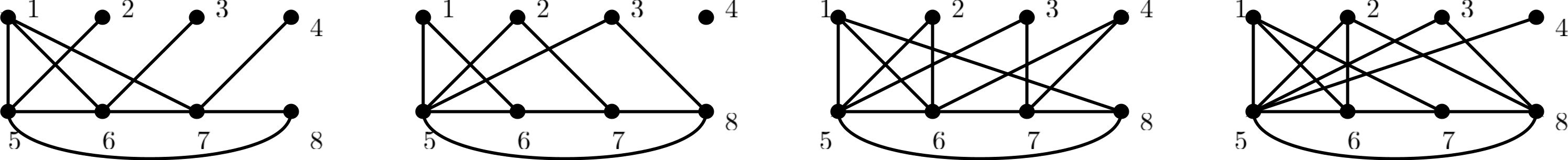}\\
			\hline
		\end{longtable}
		
		Now we consider a few model graphs $G$ that are used to generate graphs $H$ for which $H$ and $H^\tau$ are non-isomorphic and cospectral by applying Procedure 1 or 2 as described below. We calculate the number of non-isomorphic cospectral graphs that can be obtained for each of these model graphs $G$ and draw a few of them in the following table when this number is large.
		
		Examples using Procedure 1.
		
		\begin{enumerate}[label = Model \arabic*.,itemindent=*]
			\item[2.] 
			$n=3$ and $m\in\{2,3,4\}.$ The graph $G$ consistes of $3$ clusters, $C_1, C_2, C_3$ such that the induced subgraph defined by the vertex set $C_1\cup C_2$ is a bipartite graph and $C_3$ is an isolated cluster with no edges. Then construct the graph $H$ by following the Procedure 1 such that $V(H) = V(G)$ and $E(H) = E(G) \cup \{(v_{2,i}, v_{3,i}): i = 1, 2, \dots m\}.$
		\end{enumerate}
		
		\begin{longtable}{|p{.02\textwidth}|p{.020\textwidth}|p{.04\textwidth}|p{.80\textwidth}|}
			\hline
			\rotatebox[origin=c]{90}{Model number} & m & $\kappa$ & Examples\\
			\hline
			\hline 
			2 & 2 & 2 & \\
			& &  & \includegraphics[scale = .1]{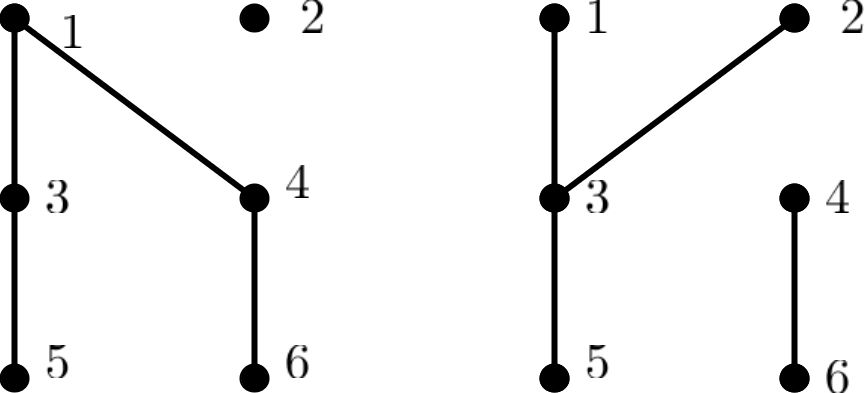}\\
			\cline{2 - 4}
			& 3 & 20 & \\ 
			\rotatebox[origin=c]{90}{(n = 3)} & & & \includegraphics[scale = .083]{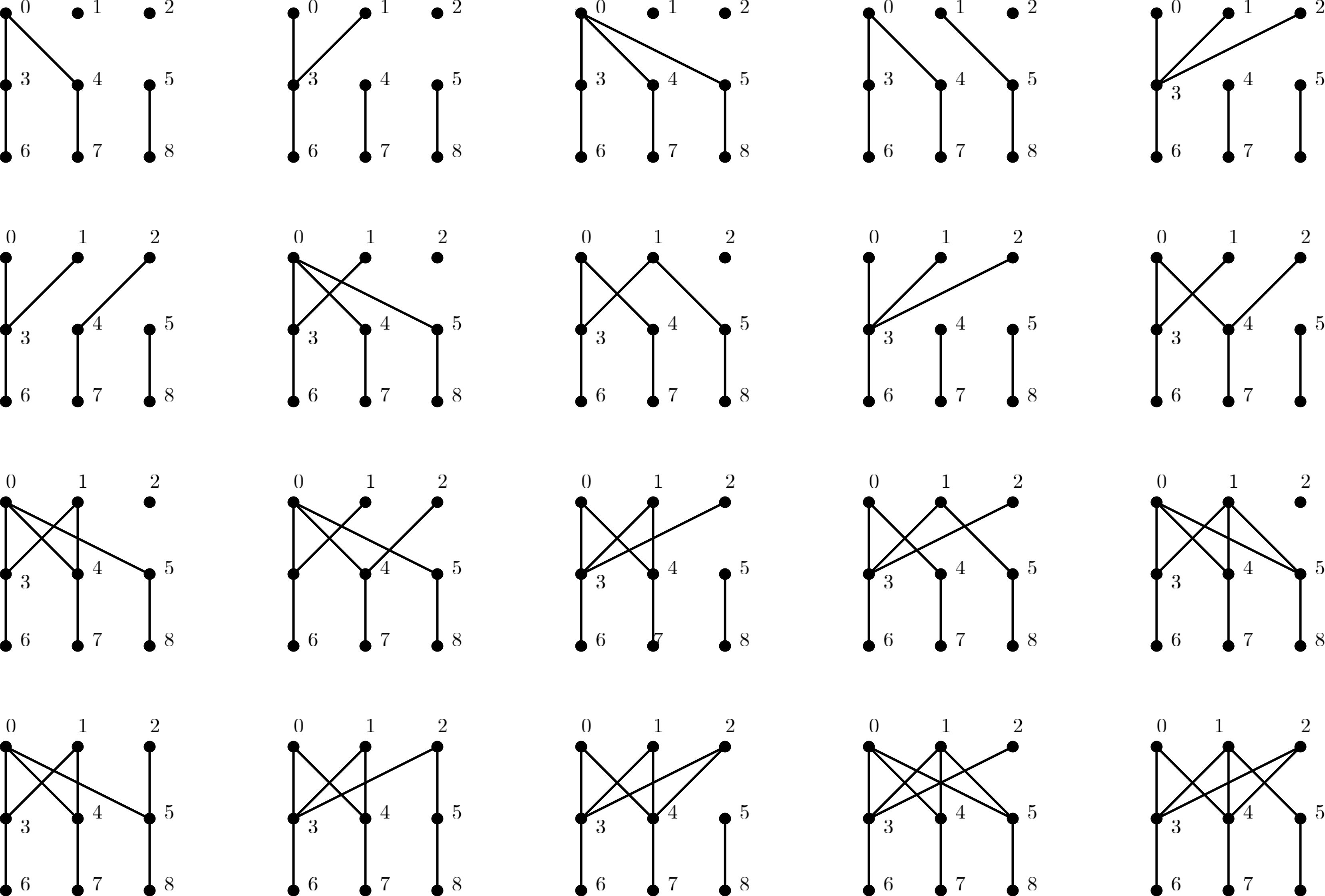}\\
			\cline{2 - 4}
			& 4 & 250 & \\
			& & & \includegraphics[scale = .093]{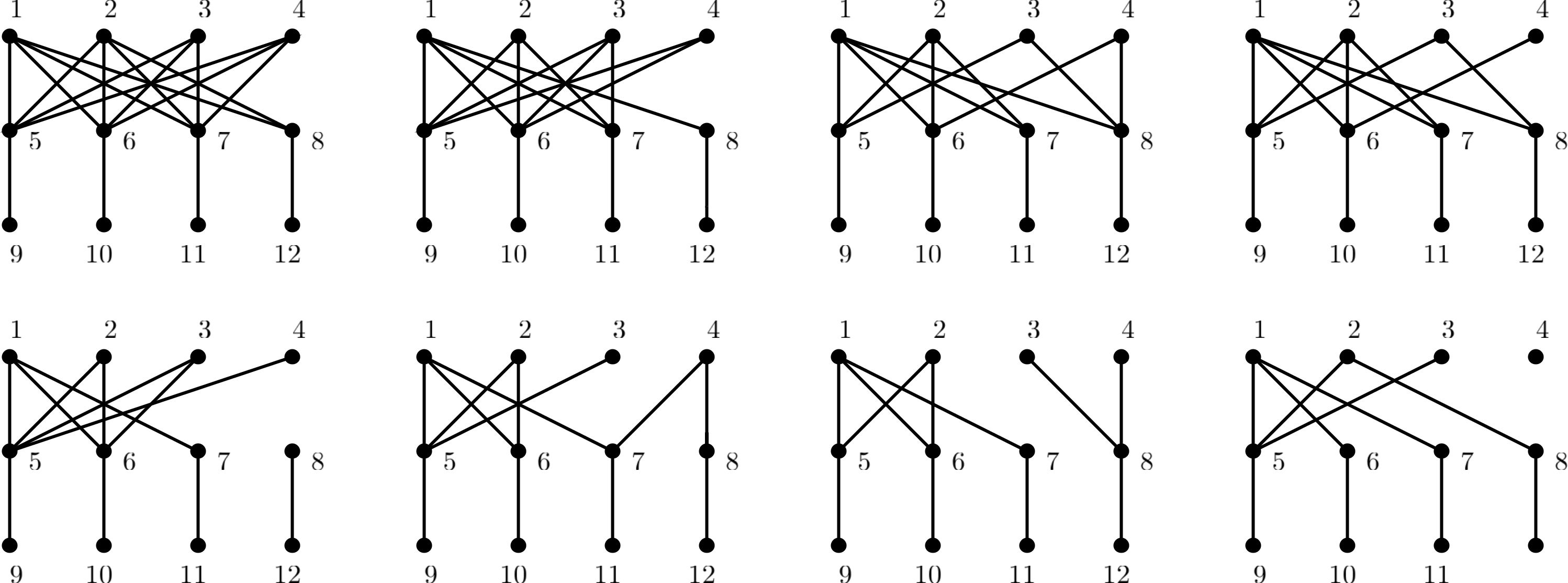}\\
			\hline
		\end{longtable}
		
		Examples using Procedure 2.
		
		\begin{enumerate}[label = Model \arabic*.,itemindent=*]
			\item[3a.] 
			Consider a clustered graph $G$ with $n=2, m\in\{2,3,4\}$ such that the clusters $C_1, C_2$ are graphs with no edges and arbitrary edges between the vertices of $C_1$ and $C_2.$ Consider the alternating clustered graph $G_a$ and apply the Procedure 2 on $G_a$ by adding one vertex in each of the new clusters. Thus the new clustered graph $H$ is obtained with $m=3, n\in\{2,3,4\}.$
			
			\item[3b.]  
			Consider a clustered graph $G$ with $n=2, m\in\{2,3,4\}$ such that the clusters $C_1$ is a graph with no edges, $C_2$ is the path $\mathcal{P}_m$ and arbitrary edges between the vertices of $C_1$ and $C_2.$ Consider the alternating clustered graph $G_a$ and apply the Procedure 2 on $G_a$ by adding one vertex in each of the new clusters. Thus the new clustered graph $H$ is obtained with $m=3, n\in\{2,3,4\}.$
			
			\item[3c.]  
			Consider a clustered graph $G$ with $n=2, m\in\{2,3,4\}$ such that the clusters $C_1$ is a graph with no edges, $C_2$ is the cycle $\mathcal{C}_m$ and arbitrary edges between the vertices of $C_1$ and $C_2.$ Consider the alternating clustered graph $G_a$ and apply the Procedure 2 on $G_a$ by adding one vertex in each of the new clusters. Thus the new clustered graph $H$ is obtained with $m=3, n\in\{2,3,4\}.$
			
		\end{enumerate}
		
		\begin{longtable}{|p{.02\textwidth}|p{.020\textwidth}|p{.020\textwidth}|p{.04\textwidth}|p{.80\textwidth}|}
			\hline
			\rotatebox[origin=c]{90}{Model number} & $m$ & $n$ & $\kappa$ & Examples \\
			\hline
			\hline 
			3a & 3 & 2  & 2 & \\
			& & & & \includegraphics[scale = .09]{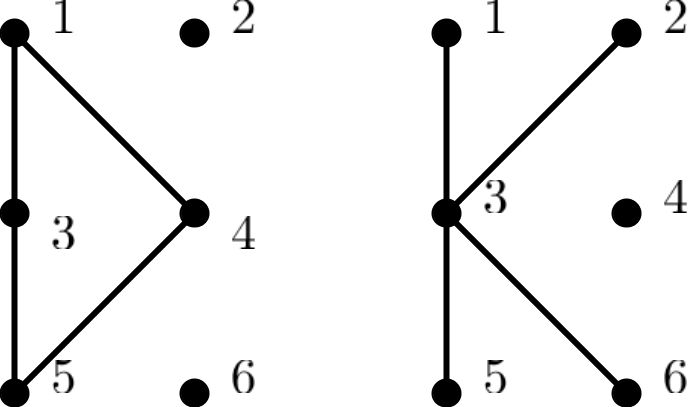}\\
			\cline{2 - 5}
			& 3 & 3 & 20 & \\
			& & & & \includegraphics[scale = .095]{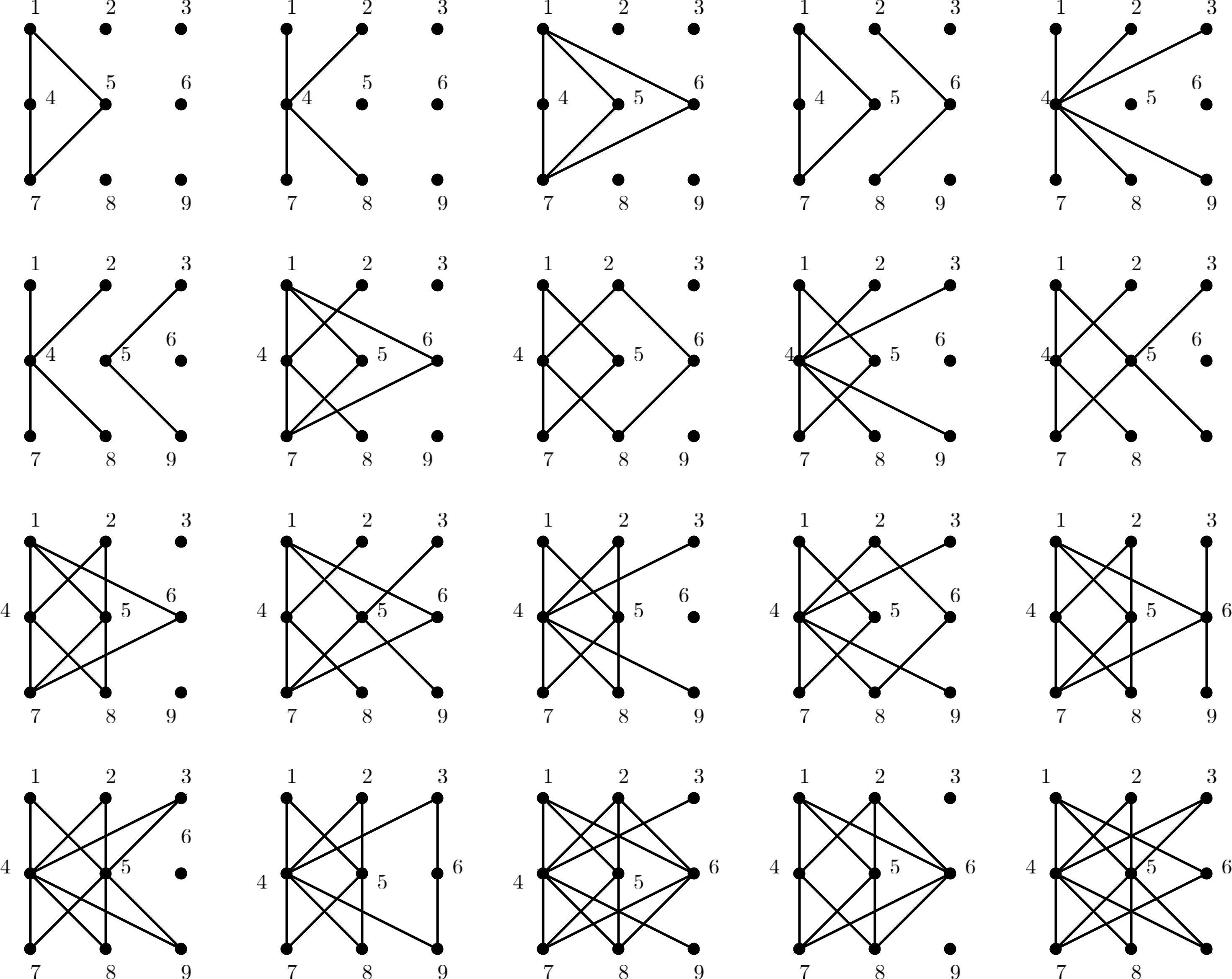}\\
			\cline{2 - 5}
			& 3 & 4 & 250 & \\
			& & & & \includegraphics[scale = .09]{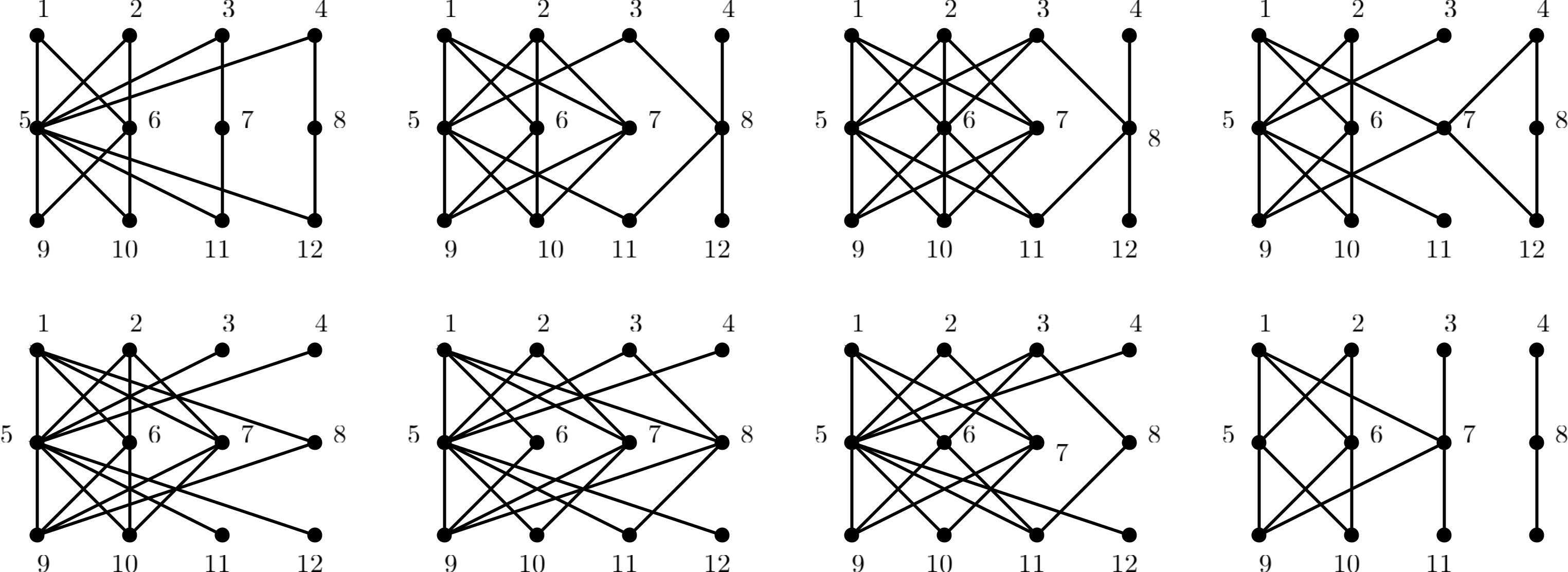}\\
			\hline
			\hline  
			3b & 2 & 2 & 0 & No non-isomorphic GTPT cospectral graph\\
			\cline{2 - 5}
			& 3 & 3 & 4 & \\
			& & & & \includegraphics[scale = .121]{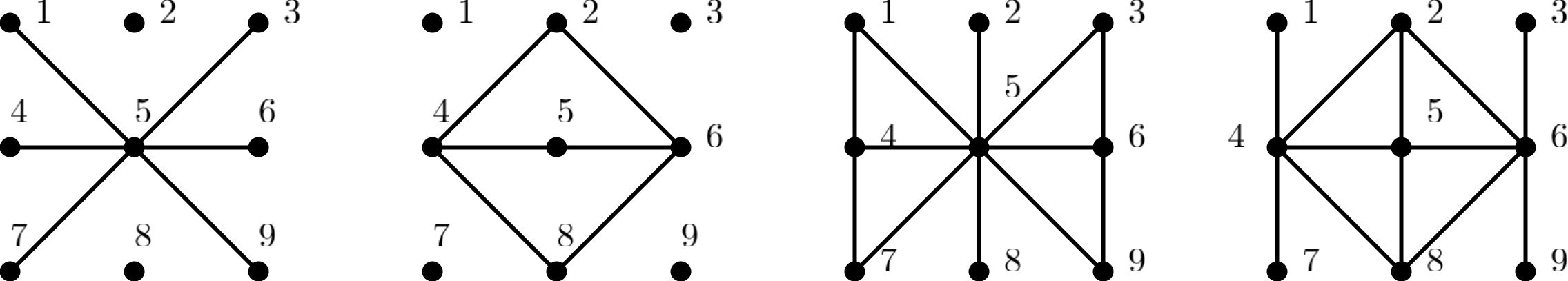}\\
			\cline{2 - 5}
			& 3 & 4 & 10 & \\
			&  & & & \includegraphics[scale = .093]{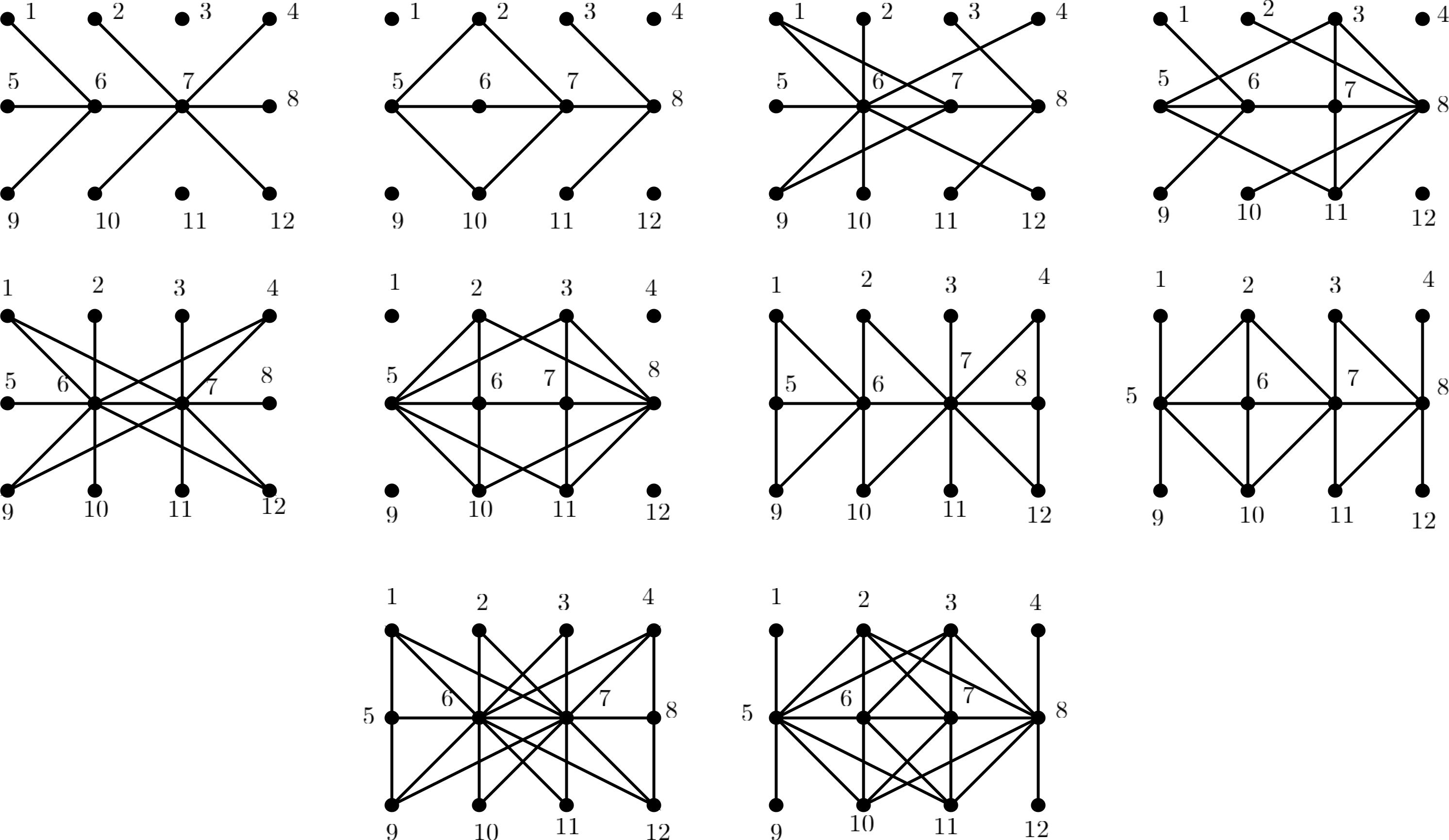}\\
			\hline
			\hline 
			3c & 3 & 2 & 0 & No non-isomorphic GTPT cospectral graph\\
			\cline{2 - 5}
			& 3 & 3 & 0 & No non-isomorphic GTPT cospectral graph\\
			\cline{2 - 5}
			& 3 & 4 & 5 & \\
			& & & & \includegraphics[scale = .123]{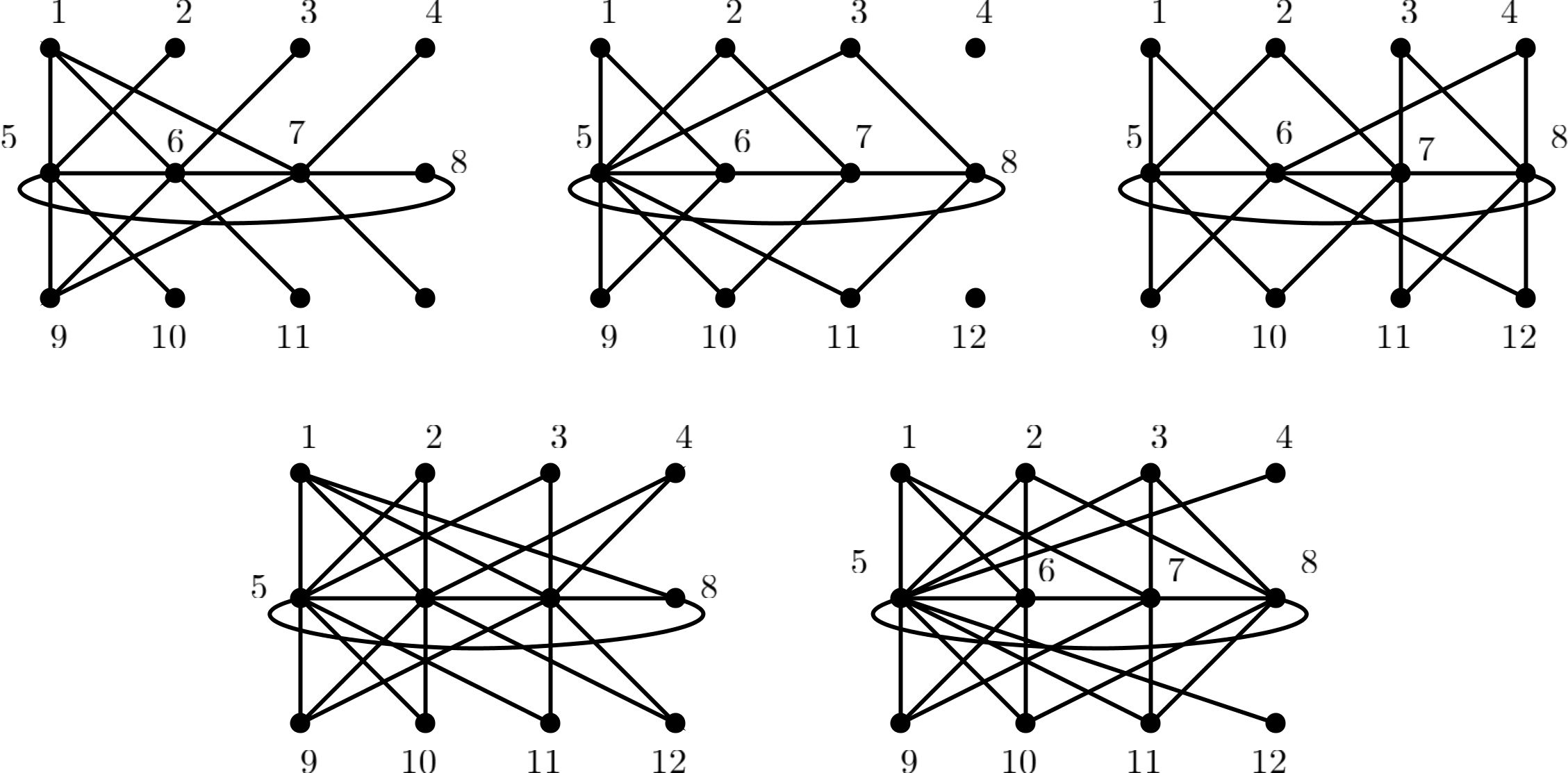}\\
			\hline
		\end{longtable}
		
		Examples by applying combinations of Procedure 1,2, alternate clustering and/or adding new clusters. 
		
		\begin{enumerate}[label = Model \arabic*.,itemindent=*]
			\item[4a.] 
			Consider a clustered graph $G$ with $n=2, m\in\{2,3,4\}$ with clusters $C_1, C_2$ such that $C_1$ is with no edges, $C_2=\mathcal{P}_m,$ and arbitrary edges between the vertices of $C_1$ and $C_2.$ Then we generate the alternating clustered graph $G_a$ with clusters $C_1',\dots, C_m'.$ Following the Procedure 2 we add one vertex $v_{j}, j=1,\dots,m$ to each of these new clusters and create edges. Finally we link the vertices $v_j, j=1,\dots,m$ to form a path $\mathcal{P}_m,$ and hence form the graph $H$ such that it is cospectral with $H^\tau.$ 
			
			\item[4b.]  
			Consider a clustered graph $G$ with $n=2, m\in\{2,3,4\}$ with clusters $C_1, C_2$ such that $C_1=\mathcal{P}_m=C_2$ and arbitrary edges between the vertices of $C_1$ and $C_2.$ Then we generate the alternating clustered graph $G_a$ with clusters $C_1',\dots, C_m'.$ Following the Procedure 2 we add one vertex $v_{j}, j=1,\dots,m$ to each of these new clusters and create edges. Finally we link the vertices $v_j, j=1,\dots,m$ to form a path $\mathcal{P}_m,$ and hence form the graph $H$ such that it is cospectral with $H^\tau.$ 
			
			\item[4c.]  
			Consider a clustered graph $G$ with $n=2, m\in\{2,3,4\}$ with clusters $C_1, C_2$ such that $C_1$ is with no edges, $C_2=\mathcal{C}_m,$ and arbitrary edges between the vertices of $C_1$ and $C_2.$ Then we generate the alternating clustered graph $G_a$ with clusters $C_1',\dots, C_m'.$ Following the Procedure 2 we add one vertex $v_{j}, j=1,\dots,m$ to each of these new clusters and create edges. Finally we link the vertices $v_j, j=1,\dots,m$ to form a path $\mathcal{C}_m,$ and hence form the graph $H$ such that it is cospectral with $H^\tau.$ 
			
		\end{enumerate}
		
		\begin{longtable}{|p{.02\textwidth}|p{.020\textwidth}|p{.020\textwidth}|p{.04\textwidth}|p{.80\textwidth}|}
			\hline
			\rotatebox[origin=c]{90}{Model number} & $m$ & $n$ & $\kappa$ & Examples \\
			\hline
			\hline 
			4a & 2 & 2 & 0 & No non-isomorphic GTPT co-spectral graph\\
			\cline{2 - 5}
			& 3 & 3  & 2 & \\
			& & & & \includegraphics[scale = .12]{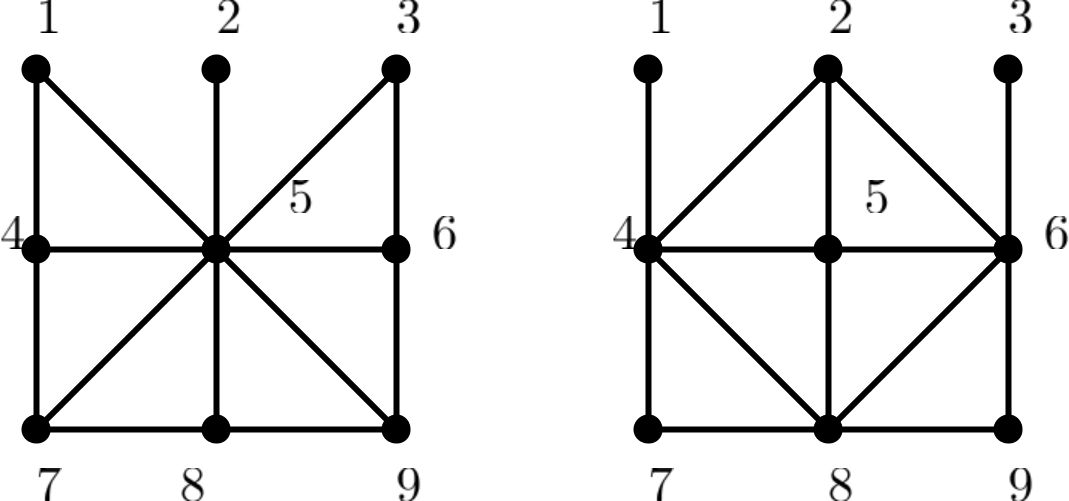}\\
			\cline{2 - 5}
			& 3 & 4 & 7 &  \\
			&  & & & \includegraphics[scale = .091]{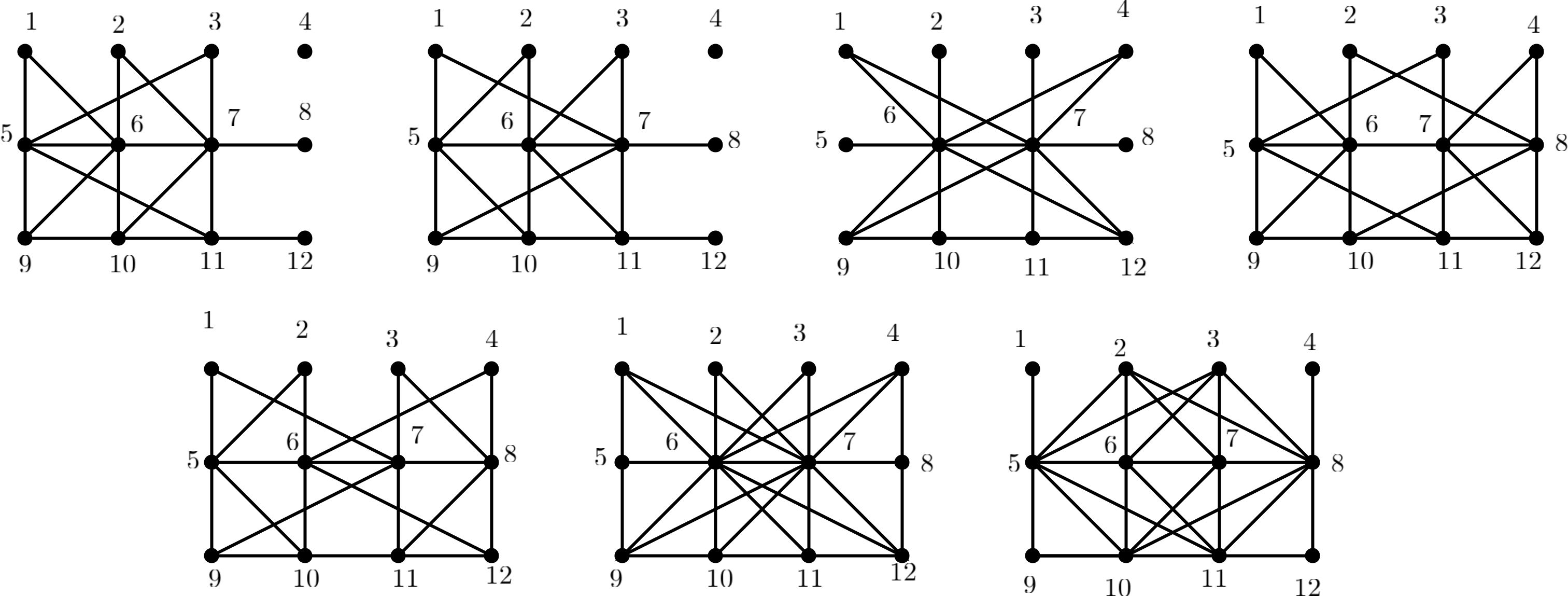}\\
			\hline
			\hline 
			4b & 3 & 2 & 2 &  \\
			& & & & \includegraphics[scale = .09]{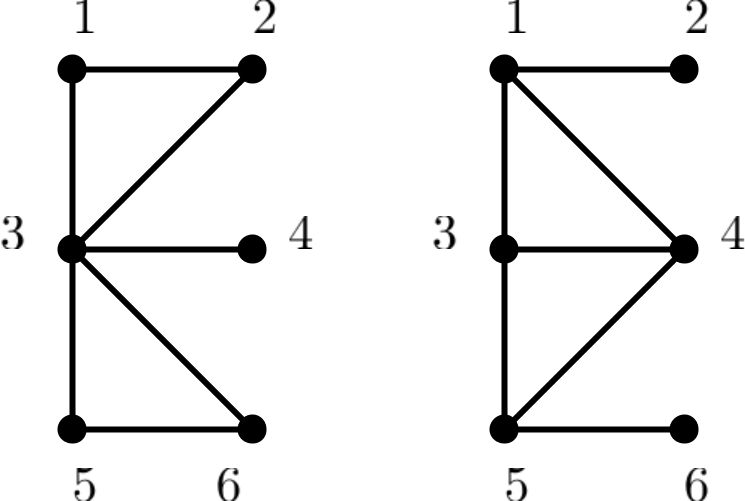}\\
			\cline{2 - 5}
			& 3 & 3 & 113 & \\
			& & & & \includegraphics[scale = .12]{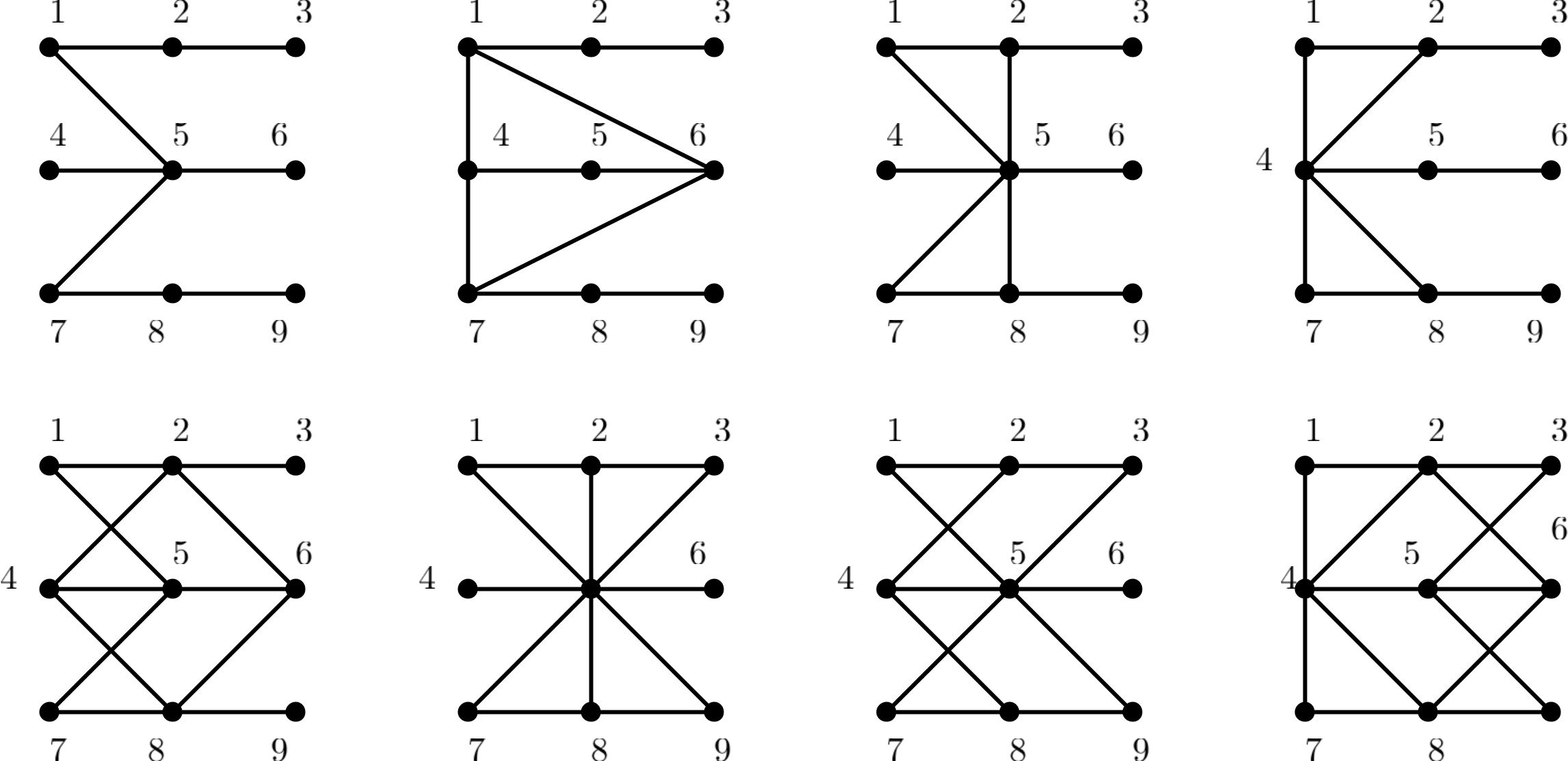}\\
			\hline
			\hline 
			4c & 2 & & 0 & No non-isomorphic GTPT cospectral graph\\
			\cline{2 - 5}
			& 3 & & 0 & No non-isomorphic GTPT cospectral graph\\
			\cline{2 - 5}
			& 3 & 4 & 8 & \\
			& & & & \includegraphics[scale = .091]{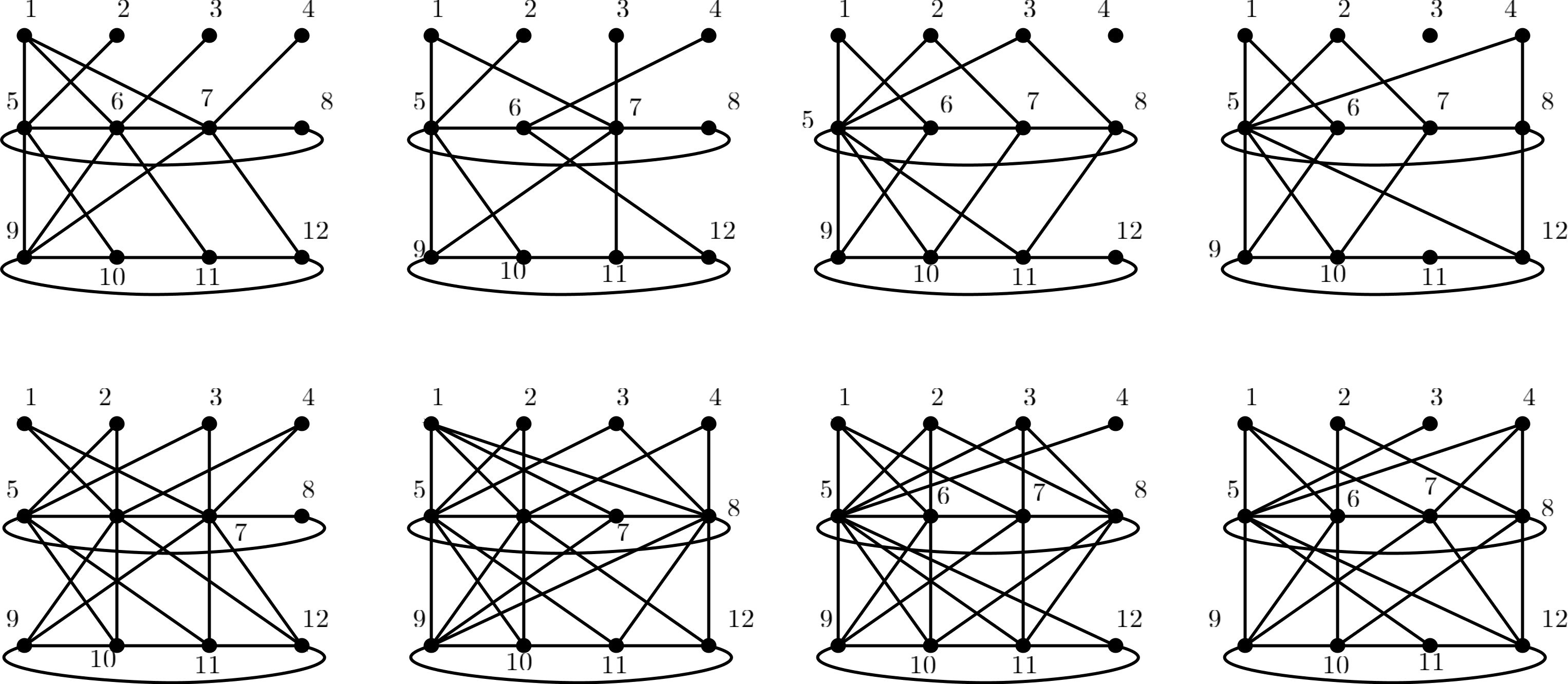}\\
			\hline
		\end{longtable}

		\section{Conclusion and future work}
		
		We have identified classes of graphs for which the corresponding GTPT equivalent graphs are cospectral and hence it is established that GTPT approach can act as a novel method for construction of cospectral mates of a given graph. In contrast to the exisiting combinatorial approches thus we introduce a matrix function approch (known as partial transpose of a block matrix) for construction of cospectral graphs. This opens up a new idea of treating a graph on a composite number of vertices as a clustered graph and identifying the properties of its clusters that guarantees to obtain cospectral mates of the given graph. We also developed two procedures to construct small and/or big GTPT euivalent cospectral graphs from a given GTPT euivalent cospectral graph. Finally we produce several examples of non-isomorphic cospectral graphs using the GTPT approch and the procedures proposed in the paper.
		
		We mention that the GTPT approch can be extended in many directions, in particular, for weighted graphs and construction of Laplacian co-spectral graphs which we would like to investigate in future. Becides, it would be an interesting problem to determine propertics of graphs that are invariant under GTPT.

	\section*{Funding}
		
		SD was supported by a doctoral fellowship provided by MHRD, Government of India.


\begin{thebibliography}{10}
	
	\bibitem{cvetkovic1980spectra}
	Dragos~M Cvetkovic, Michael Doob, and Horst Sachs.
	\newblock Spectra of graphs. theory and application.
	\newblock 1980.
	
	\bibitem{dam03which}
	Edwin~R. van Dam and Willem~H. Haemers.
	\newblock Which graphs are determined by their spectrum?
	\newblock {\em Linear Algebra and its Applications}, 373:241 -- 272, 2003.
	
	\bibitem{OP}
	Open problem garden.
	\newblock \url{http://www.openproblemgarden.org/}.
	
	\bibitem{harary1971cospectral}
	Frank Harary, Clarence King, Abbe Mowshowitz, and Ronald~C Read.
	\newblock Cospectral graphs and digraphs.
	\newblock {\em Bulletin of the London Mathematical Society}, 3(3):321--328,
	1971.
	
	\bibitem{seidel1974graphs}
	Johan~Jacob Seidel.
	\newblock Graphs and two-graphs.
	\newblock In {\em Proceedings of the Fifth Southeastern Conference on
		Combinatorics, Graph Theory and Computing (Florida Atlantic Univ., Boca
		Raton, Fla., 1974), Congressus Numerantium, No. X, Utilitas Math., Winnipeg,
		Man}, pages 125--143, 1974.
	
	\bibitem{godsil1982constructing}
	Chris~D Godsil and BD~McKay.
	\newblock Constructing cospectral graphs.
	\newblock {\em Aequationes Mathematicae}, 25(1):257--268, 1982.
	
	\bibitem{haemers2012seidel}
	Willem~H Haemers.
	\newblock Seidel switching and graph energy.
	\newblock 2012.
	\newblock Center Discussion Paper Series No. 21012-023. Available at SSRN:
	https://ssrn.com/abstract=2026916 or http://dx.doi.org/10.2139/ssrn.2026916,
	2012.
	
	\bibitem{severini2008}
	Roland Hildebrand, Stefano Mancini, and Simone Severini.
	\newblock Combinatorial laplacians and positivity under partial transpose.
	\newblock {\em Mathematical Structures in Computer Science}, 18(01):205--219,
	2008.
	
	\bibitem{wu2006conditions}
	Chai~Wah Wu.
	\newblock Conditions for separability in generalized laplacian matrices and
	diagonally dominant matrices as density matrices.
	\newblock {\em Physics Letters A}, 351(1):18--22, 2006.
	
	\bibitem{dutta2016bipartite}
	Supriyo Dutta, Bibhas Adhikari, Subhashish Banerjee, and R.~Srikanth.
	\newblock Bipartite separability and nonlocal quantum operations on graphs.
	\newblock {\em Phys. Rev. A}, 94:012306, Jul 2016.
	
	\bibitem{peres1996}
	Asher Peres.
	\newblock Separability criterion for density matrices.
	\newblock {\em Physical Review Letters}, 77(8):1413, 1996.
	
	\bibitem{horodecki1997}
	Pawel Horodecki.
	\newblock Separability criterion and inseparable mixed states with positive
	partial transposition.
	\newblock {\em arXiv preprint quant-ph/9703004}, 1997.
	
	\bibitem{cvetkovic1988recent}
	Dragos~M Cvetkovic, Michael Doob, Ivan Gutman, and Aleksandar Torga{\v{s}}ev.
	\newblock {\em Recent results in the theory of graph spectra}, volume~36.
	\newblock Elsevier, 1988.
	
	\bibitem{fujii1999isospectral}
	Hironobu Fujii and Atsushi Katsuda.
	\newblock Isospectral graphs and isoperimetric constants.
	\newblock {\em Discrete mathematics}, 207(1-3):33--52, 1999.
	
	\bibitem{halbeisen1999generation}
	Lorenz Halbeisen and Norbert Hungerb{\"u}hler.
	\newblock Generation of isospectral graphs.
	\newblock {\em Journal of Graph Theory}, 31(3):255--265, 1999.
	
	\bibitem{rowlinson1996characteristic}
	Peter Rowlinson.
	\newblock The characteristic polynomials of modified graphs.
	\newblock {\em Discrete applied mathematics}, 67(1-3):209--219, 1996.
	
	\bibitem{knuth1997partitioned}
	Donald~E Knuth.
	\newblock Partitioned tensor products and their spectra.
	\newblock {\em Journal of Algebraic Combinatorics}, 6(3):259--267, 1997.
	
	\bibitem{schwenk1979construction}
	AJ~Schwenk, WC~Herndon, and ML~Ellzey.
	\newblock The construction of cospectral composite graphs.
	\newblock {\em Annals of the New York Academy of Sciences}, 319(1):490--496,
	1979.
	
	\bibitem{schwenk1973almost}
	Allen~J Schwenk.
	\newblock Almost all trees are cospectral.
	\newblock {\em New directions in the theory of graphs}, pages 275--307, 1973.
	
	\bibitem{haemers2004enumeration}
	Willem~H Haemers and Edward Spence.
	\newblock Enumeration of cospectral graphs.
	\newblock {\em European Journal of Combinatorics}, 25(2):199--211, 2004.
	
	\bibitem{horn2012matrix}
	Roger~A Horn and Charles~R Johnson.
	\newblock {\em Matrix analysis}.
	\newblock Cambridge university press, 2012.
	
	\bibitem{dutta2017quantum}
	Supriyo Dutta, Bibhas Adhikari, and Subhashish Banerjee.
	\newblock Quantum discord of states arising from graphs.
	\newblock {\em Quantum Information Processing}, 16(8):183, 2017.
	
	\bibitem{dutta2017zero}
	Supriyo Dutta, Bibhas Adhikari, and Subhashish Banerjee.
	\newblock Zero discord quantum states arising from weighted digraphs.
	\newblock {\em arXiv preprint arXiv:1705.00808}, 2017.
	
	\bibitem{garcia2009unitary}
	Stephan~Ramon Garcia and James~E Tener.
	\newblock Unitary equivalence of a matrix to its transpose.
	\newblock {\em arXiv preprint arXiv:0908.2107}, 2009.
	
	\bibitem{schult2008exploring}
	Daniel~A Schult and P~Swart.
	\newblock Exploring network structure, dynamics, and function using networkx.
	\newblock In {\em Proceedings of the 7th Python in Science Conferences (SciPy
		2008)}, volume 2008, pages 11--16, 2008.
	
	\bibitem{cordella2001improved}
	Luigi~Pietro Cordella, Pasquale Foggia, Carlo Sansone, and Mario Vento.
	\newblock An improved algorithm for matching large graphs.
	\newblock In {\em 3rd IAPR-TC15 workshop on graph-based representations in
		pattern recognition}, pages 149--159, 2001.
	
\end{thebibliography}

\end{document}